\let\mathcaltmp\mathcal
\let\mathcal\mathscr
\let\mathscr\mathcaltmp
\def\thm@space@setup{\thm@preskip=7pt
\thm@postskip=7pt}
\newtheoremstyle{plain}
  {}
  {}
  {\slshape}
  {}
  {\bfseries}
  {.}
  { }
  {}
\newtheoremstyle{definition}
  {}
  {}
  {}
  {}
  {\bfseries}
  {.}
  { }
  {}
\renewenvironment{proof}[1][\proofname]{\par
  \pushQED{\qed}%
  \normalfont \topsep0\p@\relax
  \trivlist
  \item[\hskip\labelsep\itshape
  #1\@addpunct{.}]\ignorespaces
}{%
  \popQED\endtrivlist\@endpefalse
}
\newcommand{\eqnum}{\refstepcounter{equation}\textup{\tagform@{\theequation}}}
\makeatletter \@addtoreset{equation}{section} \makeatother
\renewcommand{\theequation}{\thesection.\arabic{equation}}
\newtheorem{thm}[equation]{Theorem}
\newtheorem*{thm*}{Theorem}
\newtheorem{lem}[equation]{Lemma}
\newtheorem{cor}[equation]{Corollary}
\newtheorem{prop}[equation]{Proposition}
\newtheorem*{defthm*}{Definition/Theorem}
\theoremstyle{definition}
\newtheorem{rem}[equation]{Remark}
\newtheorem*{exam*}{Example}
\newcommand\arXiv[1]{\href{http://arxiv.org/abs/#1}{arXiv:#1}}
\newcommand{\changelocaltocdepth}[1]{%
  \addtocontents{toc}{\protect\setcounter{tocdepth}{#1}}%
  \setcounter{tocdepth}{#1}}
\newcommand{\nc}{\newcommand}
\nc{\renc}{\renewcommand}
\nc{\ssec}{\subsection}
\nc{\sssec}{\subsubsection}
\nc{\on}{\operatorname}
\nc{\term}[1]{#1\xspace}
\tikzset{
  commutative diagrams/.cd,
  arrow style=tikz,
  diagrams={>=latex}}
\tikzset{
  column sep/.code=\def\pgfmatrixcolumnsep{\pgf@matrix@xscale*(#1)},
  row sep/.code   =\def\pgfmatrixrowsep{\pgf@matrix@yscale*(#1)},
  matrix xscale/.code=%
    \pgfmathsetmacro\pgf@matrix@xscale{\pgf@matrix@xscale*(#1)},
  matrix yscale/.code=%
    \pgfmathsetmacro\pgf@matrix@yscale{\pgf@matrix@yscale*(#1)},
  matrix scale/.style={/tikz/matrix xscale={#1},/tikz/matrix yscale={#1}}}
\def\pgf@matrix@xscale{1}
\def\pgf@matrix@yscale{1}
\setlist[enumerate,1]{label={(\alph*)},itemsep=\parskip}
\newlist{thmlist}{enumerate}{1}
\setlist[thmlist,1]{
  label={\em(\roman*)}, ref={(\roman*)},
  itemsep=0.5em,
  topsep=0em,
  leftmargin=*,
  align=left,widest=vi)}
\newlist{thmlistbis}{enumerate}{1}
\setlist[thmlistbis,1]{
  label={\em(\roman*~\textit{bis})},
  ref={(\roman*}~\textit{bis}\upshape{)},
  itemsep=0.5em,
  topsep=-0.7em,
  leftmargin=0pt, align=right, widest=vi)}
\newlist{defnlist}{enumerate}{2}
\setlist[defnlist,1]{
  label={(\roman*)}, ref={(\roman*)},
  itemsep=0.5em,
  topsep=0em,
  leftmargin=*,
  align=left, widest=vi)}
\setlist[defnlist,2]{
  label={(\alph*)}, ref={(\alph*)},
  itemsep=0.75em,
  labelsep=0em,labelindent=0em,leftmargin=*,align=left,widest=vi),
  topsep=0.75em}
\newlist{inlinelist}{enumerate*}{1}
\setlist[inlinelist,1]{label={(\alph*)}}
\newlist{inlinedefnlist}{enumerate*}{1}
\definecolor{green}{HTML}{38550C}
\setlist[inlinedefnlist,1]{label={\color{green}(\roman*)}}
\newlist{inlinethmlist}{enumerate*}{1}
\definecolor{green}{HTML}{38550C}
\setlist[inlinethmlist,1]{label={\color{green}(\roman*)}}
\nc{\cA}{\ensuremath{\mathcal{A}}\xspace}
\nc{\cB}{\ensuremath{\mathcal{B}}\xspace}
\nc{\cC}{\ensuremath{\mathcal{C}}\xspace}
\nc{\cD}{\ensuremath{\mathcal{D}}\xspace}
\nc{\cE}{\ensuremath{\mathcal{E}}\xspace}
\nc{\cF}{\ensuremath{\mathcal{F}}\xspace}
\nc{\cG}{\ensuremath{\mathcal{G}}\xspace}
\nc{\cH}{\ensuremath{\mathcal{H}}\xspace}
\nc{\cI}{\ensuremath{\mathcal{I}}\xspace}
\nc{\cJ}{\ensuremath{\mathcal{J}}\xspace}
\nc{\cK}{\ensuremath{\mathcal{K}}\xspace}
\nc{\cL}{\ensuremath{\mathcal{L}}\xspace}
\nc{\cM}{\ensuremath{\mathcal{M}}\xspace}
\nc{\cN}{\ensuremath{\mathcal{N}}\xspace}
\nc{\cO}{\ensuremath{\mathcal{O}}\xspace}
\nc{\cP}{\ensuremath{\mathcal{P}}\xspace}
\nc{\cQ}{\ensuremath{\mathcal{Q}}\xspace}
\nc{\cR}{\ensuremath{\mathcal{R}}\xspace}
\nc{\cS}{\ensuremath{\mathcal{S}}\xspace}
\nc{\cT}{\ensuremath{\mathcal{T}}\xspace}
\nc{\cU}{\ensuremath{\mathcal{U}}\xspace}
\nc{\cV}{\ensuremath{\mathcal{V}}\xspace}
\nc{\cW}{\ensuremath{\mathcal{W}}\xspace}
\nc{\cX}{\ensuremath{\mathcal{X}}\xspace}
\nc{\cY}{\ensuremath{\mathcal{Y}}\xspace}
\nc{\cZ}{\ensuremath{\mathcal{Z}}\xspace}
\nc{\sA}{\ensuremath{\mathcal{A}}\xspace}
\nc{\sB}{\ensuremath{\mathcal{B}}\xspace}
\nc{\sC}{\ensuremath{\mathcal{C}}\xspace}
\nc{\sD}{\ensuremath{\mathcal{D}}\xspace}
\nc{\sE}{\ensuremath{\mathcal{E}}\xspace}
\nc{\sF}{\ensuremath{\mathcal{F}}\xspace}
\nc{\sG}{\ensuremath{\mathcal{G}}\xspace}
\nc{\sH}{\ensuremath{\mathcal{H}}\xspace}
\nc{\sI}{\ensuremath{\mathcal{I}}\xspace}
\nc{\sJ}{\ensuremath{\mathcal{J}}\xspace}
\nc{\sK}{\ensuremath{\mathcal{K}}\xspace}
\nc{\sL}{\ensuremath{\mathcal{L}}\xspace}
\nc{\sM}{\ensuremath{\mathcal{M}}\xspace}
\nc{\sN}{\ensuremath{\mathcal{N}}\xspace}
\nc{\sO}{\ensuremath{\mathcal{O}}\xspace}
\nc{\sP}{\ensuremath{\mathcal{P}}\xspace}
\nc{\sQ}{\ensuremath{\mathcal{Q}}\xspace}
\nc{\sR}{\ensuremath{\mathcal{R}}\xspace}
\nc{\sS}{\ensuremath{\mathcal{S}}\xspace}
\nc{\sT}{\ensuremath{\mathcal{T}}\xspace}
\nc{\sU}{\ensuremath{\mathcal{U}}\xspace}
\nc{\sV}{\ensuremath{\mathcal{V}}\xspace}
\nc{\sW}{\ensuremath{\mathcal{W}}\xspace}
\nc{\sX}{\ensuremath{\mathcal{X}}\xspace}
\nc{\sY}{\ensuremath{\mathcal{Y}}\xspace}
\nc{\sZ}{\ensuremath{\mathcal{Z}}\xspace}
\nc{\bA}{\ensuremath{\mathbf{A}}\xspace}
\nc{\bB}{\ensuremath{\mathbf{B}}\xspace}
\nc{\bC}{\ensuremath{\mathbf{C}}\xspace}
\nc{\bD}{\ensuremath{\mathbf{D}}\xspace}
\nc{\bE}{\ensuremath{\mathbf{E}}\xspace}
\nc{\bF}{\ensuremath{\mathbf{F}}\xspace}
\nc{\bG}{\ensuremath{\mathbf{G}}\xspace}
\nc{\bH}{\ensuremath{\mathbf{H}}\xspace}
\nc{\bI}{\ensuremath{\mathbf{I}}\xspace}
\nc{\bJ}{\ensuremath{\mathbf{J}}\xspace}
\nc{\bK}{\ensuremath{\mathbf{K}}\xspace}
\nc{\bL}{\ensuremath{\mathbf{L}}\xspace}
\nc{\bM}{\ensuremath{\mathbf{M}}\xspace}
\nc{\bN}{\ensuremath{\mathbf{N}}\xspace}
\nc{\bO}{\ensuremath{\mathbf{O}}\xspace}
\nc{\bP}{\ensuremath{\mathbf{P}}\xspace}
\nc{\bQ}{\ensuremath{\mathbf{Q}}\xspace}
\nc{\bR}{\ensuremath{\mathbf{R}}\xspace}
\nc{\bS}{\ensuremath{\mathbf{S}}\xspace}
\nc{\bT}{\ensuremath{\mathbf{T}}\xspace}
\nc{\bU}{\ensuremath{\mathbf{U}}\xspace}
\nc{\bV}{\ensuremath{\mathbf{V}}\xspace}
\nc{\bW}{\ensuremath{\mathbf{W}}\xspace}
\nc{\bX}{\ensuremath{\mathbf{X}}\xspace}
\nc{\bY}{\ensuremath{\mathbf{Y}}\xspace}
\nc{\bZ}{\ensuremath{\mathbf{Z}}\xspace}
\nc{\bbA}{\ensuremath{\mathbb{A}}\xspace}
\nc{\bbB}{\ensuremath{\mathbb{B}}\xspace}
\nc{\bbC}{\ensuremath{\mathbb{C}}\xspace}
\nc{\bbD}{\ensuremath{\mathbb{D}}\xspace}
\nc{\bbE}{\ensuremath{\mathbb{E}}\xspace}
\nc{\bbF}{\ensuremath{\mathbb{F}}\xspace}
\nc{\bbG}{\ensuremath{\mathbb{G}}\xspace}
\nc{\bbH}{\ensuremath{\mathbb{H}}\xspace}
\nc{\bbI}{\ensuremath{\mathbb{I}}\xspace}
\nc{\bbJ}{\ensuremath{\mathbb{J}}\xspace}
\nc{\bbK}{\ensuremath{\mathbb{K}}\xspace}
\nc{\bbL}{\ensuremath{\mathbb{L}}\xspace}
\nc{\bbM}{\ensuremath{\mathbb{M}}\xspace}
\nc{\bbN}{\ensuremath{\mathbb{N}}\xspace}
\nc{\bbO}{\ensuremath{\mathbb{O}}\xspace}
\nc{\bbP}{\ensuremath{\mathbb{P}}\xspace}
\nc{\bbQ}{\ensuremath{\mathbb{Q}}\xspace}
\nc{\bbR}{\ensuremath{\mathbb{R}}\xspace}
\nc{\bbS}{\ensuremath{\mathbb{S}}\xspace}
\nc{\bbT}{\ensuremath{\mathbb{T}}\xspace}
\nc{\bbU}{\ensuremath{\mathbb{U}}\xspace}
\nc{\bbV}{\ensuremath{\mathbb{V}}\xspace}
\nc{\bbW}{\ensuremath{\mathbb{W}}\xspace}
\nc{\bbX}{\ensuremath{\mathbb{X}}\xspace}
\nc{\bbY}{\ensuremath{\mathbb{Y}}\xspace}
\nc{\bbZ}{\ensuremath{\mathbb{Z}}\xspace}
\nc{\mrm}[1]{\ensuremath{\mathrm{#1}}\xspace}
\nc{\mit}[1]{\ensuremath{\mathit{#1}}\xspace}
\nc{\mbf}[1]{\ensuremath{\mathbf{#1}}\xspace}
\nc{\mcal}[1]{\ensuremath{\mathcal{#1}}\xspace}
\nc{\msc}[1]{\ensuremath{\mathscr{#1}}\xspace}
\nc{\sub}{\subseteq}
\nc{\too}{\longrightarrow}
\nc{\hook}{\hookrightarrow}
\nc{\hooklongrightarrow}{\lhook\joinrel\longrightarrow}
\nc{\hooklong}{\hooklongrightarrow}
\nc{\hooklongleftarrow}{\longleftarrow\joinrel\rhook}
\nc{\twoheadlongrightarrow}{\relbar\joinrel\twoheadrightarrow}
\nc{\longrightleftarrows}{\ \raisebox{0.3ex}{\(\mathrel{\substack{\xrightarrow{\rule{1em}{0em}} \\[-1ex] \xleftarrow{\rule{1em}{0em}}}}\)}\ }
\renc{\ge}{\geqslant}
\renc{\le}{\leqslant}
\nc{\id}{\mathrm{id}}
\DeclareMathOperator{\Hom}{\on{Hom}}
\nc{\uHom}{\underline{\smash{\Hom}}}
\DeclareMathOperator{\End}{\on{End}}
\nc{\uEnd}{\underline{\smash{\End}}}
\nc{\colim}{\varinjlim}
\renc{\lim}{\varprojlim}
\nc{\Cofib}{\on{Cofib}}
\nc{\Fib}{\on{Fib}}
\nc{\initial}{\varnothing}
\nc{\op}{\mathrm{op}}
\DeclareMathOperator*{\fibprod}{\times}
\renc{\setminus}{\smallsetminus}
\DeclarePairedDelimiter\abs{\lvert}{\rvert}%
\newcommand{\thmref}[1]{Theorem~\ref{#1}}
\newcommand{\secref}[1]{Sect.~\ref{#1}}
\newcommand{\sssecref}[1]{(\ref{#1})}
\newcommand{\lemref}[1]{Lemma~\ref{#1}}
\newcommand{\propref}[1]{Proposition~\ref{#1}}
\newcommand{\remref}[1]{Remark~\ref{#1}}
\renewcommand{\eqref}[1]{(\ref{#1})}
\newcommand{\itemref}[1]{\ref{#1}}
\nc{\A}{\bA}
\nc{\bDelta}{\mathbf{\Delta}}
\nc{\Cech}{\textnormal{\v{C}}}
\nc{\vir}{\mrm{vir}}
\renc{\H}{\on{H}}
\nc{\BM}{\mrm{BM}}
\nc{\Z}{\bZ}
\nc{\Q}{\bQ}
\nc{\Einfty}{{\sE_\infty}}
\nc{\pr}{\mrm{pr}}
\nc{\pt}{\mrm{pt}}
\nc{\vb}[1]{\langle{#1}\rangle}
\nc{\C}{\on{C}}
\nc{\Chom}{\mrm{C}_\bullet}
\nc{\Ccoh}{\mrm{C}^\bullet}
\nc{\Ccohc}{\mrm{C}_{\mrm{c}}^\bullet}
\nc{\CBM}{\mrm{C}^{\BM}_\bullet}
\nc{\CCOH}{\mathsf{C}^\bullet}
\nc{\CCOHC}{\mathsf{C}_{\mathsf{c}}^\bullet}
\nc{\CHOM}{\mathsf{C}_\bullet}
\nc{\CHOMBM}{\mathsf{C}^{\mathsf{BM}}_\bullet}
\nc{\Spc}{\mrm{Spc}}
\nc{\Stk}{\mrm{Stk}}
\nc{\Art}{\mrm{Art}}
\nc{\Shv}{\on{Shv}}
\nc{\Spt}{\mrm{Spt}}
\nc{\Anima}{\mathbf{H}}
\nc{\Tot}{\on{Tot}}
\nc{\Ex}{\mrm{Ex}}
\nc{\unit}{\mrm{unit}}
\nc{\counit}{\mrm{counit}}
\nc{\gys}{\mrm{gys}}
\nc{\scr}{\term{derived commutative ring}}
\nc{\scrs}{\term{derived commutative rings}}
\nc{\inftyCat}{\term{$\infty$-category}}
\nc{\inftyCats}{\term{$\infty$-categories}}
\nc{\inftyGrpd}{\term{$\infty$-groupoid}}
\nc{\inftyGrpds}{\term{$\infty$-groupoids}}
\nc{\dA}{\term{derived Artin}}
\title{Equivariant homology of stacks\vspace{-2mm}}
\author[A.\,A. Khan]{Adeel A. Khan}
\date{2025-06-04}
\def\l@subsection{\@tocline{2}{0pt}{4pc}{6pc}{}}
\begin{document}

\begin{abstract}
  We use sheaf theory and the six operations to define and study the (equivariant) homology of stacks.
  The construction makes sense in the algebraic, complex analytic, or even topological categories.
  \vspace{-5mm}
\end{abstract}

\maketitle

\setlength{\parindent}{1em}
\parskip 0.6em

\thispagestyle{empty}


\changelocaltocdepth{1}

\section*{Introduction}

Let $X$ be a locally compact and Hausdorff topological space.
The singular homology groups $\H_*(X; \bZ)$ can be described via sheaf theory.
Namely, one has the functors $f_!$ and $f^!$ of compactly supported direct and inverse image along the projection $f : X \to \pt$ on derived categories of sheaves of abelian groups.
By Grothendieck--Verdier duality, one has
\[
  \H_*(X; \bZ) \simeq \H^{-*}(f_!f^! \bZ).
\]
Similarly, Borel--Moore homology\footnote{%
  a.k.a. locally finite homology
} is computed by the complex $f_*f^! \bZ$.
Moreover, the various operations and properties of both variants fall out of the yoga of the six functors right away.
We refer to \cite{Godement,KashiwaraSchapira} for background on sheaves and the six functor formalism\footnote{%
  More modern treatments such as \cite{Volpe} allow us to treat unbounded derived categories, which is why we do not need to assume finiteness of cohomological dimension above.
} and \cite[Chap.~V]{Bredon} for the sheaf-theoretic point of view on homology and Borel--Moore homology.

In this note we are interested in generalizing the above story to define \emph{$G$-equivariant} homology, when $G$ is a topological group acting on $X$.
At the same time, we also want to allow $X$ to be a topological Artin\footnote{%
  or even \emph{higher} Artin
} stack.
The language of \inftyCats makes it easy to extend derived categories to stacks, and the six operations are also available in that setting by \cite{weavelisse} (see also \cite{LaszloOlsson,LiuZheng,KapranovVasserot}, and our review in \secref{sec:sheaves}).
The $G$-equivariant homology of $X$ is the \emph{relative} homology of the induced map of quotient stacks $g : [X/G] \to BG := [\pt/G]$, and similarly for the Borel--Moore variant.
That is to say,
\[
  \H_*^G(X; \Z) := \H^{-*}(BG, g_!g^! \bZ),
  \quad \H_*^{\BM,G}(X; \Z) := \H^{-*}(BG, g_*g^! \bZ).
\]
As above, all desired properties of these constructions are derived immediately from the six functor formalism.
Of course, they also specialize to homology theories in the complex analytic and algebraic categories.\footnote{%
  For readers uncomfortable with topological stacks, we note that it is not necessary to pass through underlying topological stacks; see \sssecref{sssec:algstkshv}.
}

One motivation to write this note came from work of D.~Joyce (see \cite{Joyce}), who formulates certain wall-crossing formulas in enumerative geometry in terms of vertex algebra structures defined on the homology of certain (algebraic) moduli stacks.
This in particular requires a suitable equivariant homology theory as above.
See also forthcoming work of A.~Bojko \cite{Bojko}, who uses our definition to refine Joyce's vertex algebras.\footnote{%
  Combining the discussion in Subsects.~\ref{ssec:homotype}, \ref{ssec:relhom} (particularly \remref{rem:lim1}), and \ref{ssec:Borel}, one can see that the definition proposed by Joyce is a quotient of ours.
}

Let us also note that the considerations of this paper make sense in any suitable six functor formalism\footnote{%
  to be precise: in any topological weave in the sense of \cite{weaves,weavelisse}
}.
For example, one could use the machinery of motivic sheaves (see \cite[App.~A]{virtual} or \cite[\S 7]{weavelisse}) to define equivariant motivic homology together with a cycle class map to equivariant singular homology.
Motivic homology can be thought of as ``compactly supported'' version of (higher) Chow groups ($\approx$ motivic Borel--Moore homology) and, unlike the latter (see \cite[Rem.~2.3(b)]{Joyce}), can be used to define a Chow type lift of Joyce's vertex algebra structure.

\ssec*{Acknowledgments}

I am grateful to Arkadij Bojko and Dominic Joyce for that discussions that led to this note as well as feedback on previous drafts.
I also thank Charanya Ravi for useful discussions, and James E. Hotchkiss for pointing out an error.

\changelocaltocdepth{2}

\section{Sheaves}
\label{sec:sheaves}

\subsection{Topological stacks}

\subsubsection{}

Let $\Spc$ denote the category of topological spaces, always implicitly assumed locally compact and Hausdorff.
We say that a morphism of topological spaces is \emph{smooth}\footnote{%
  We will never use this term to refer to differentiable structures.
} if it is a topological submersion.

\subsubsection{}

Let $\Stk$ denote the \inftyCat of topological stacks, i.e., sheaves of \inftyGrpds on $\Spc$.
We say that a topological stack $X$ is \emph{$0$-Artin} if it is a topological space.
We define $1$-Artinness as follows.
\begin{defnlist}
  \item A morphism $f : X \to Y$ is \emph{$0$-Artin} (or simply \emph{representable}) if for every $V\in\Spc$ and every morphism $V \to Y$, the fibred product $X \fibprod_Y V$ is $0$-Artin.
  \item A $0$-Artin morphism $f : X \to Y$ is \emph{smooth} if for every $Y\in\Spc$ and every morphism $V \to Y$, the base change $X\fibprod_Y V \to V$ is smooth.
  \item A topological stack $X$ is \emph{$1$-Artin} if its diagonal $\Delta_X : X \to X \times X$ is $0$-Artin, and there exists $U\in\Spc$ and a morphism\footnote{%
    which is automatically $0$-Artin when $\Delta_X$ is $0$-Artin
  } $p : U \twoheadrightarrow X$ which is a smooth surjection.
  \item If $Y$ is a space and $X$ is $1$-Artin, a morphism $f : X \to Y$ is \emph{smooth} if there exists a space $U$ and a smooth surjection $p : U \twoheadrightarrow X$ such that $f\circ p : U \to Y$ is a smooth morphism of spaces.
\end{defnlist}
Iterating this inductively\footnote{%
  noting at each point that if $X$ has $(n-1)$-representable diagonal, then any morphism $U \to X$ from a space $U$ is automatically $(n-1)$-representable
}, we define $k$-Artin stacks and (smooth) $k$-Artin morphisms for all $k \ge 0$.
We say $X$ is \emph{Artin} if it is $k$-Artin for some $k$, and a morphism is Artin if it is representable by Artin stacks.

\subsubsection{}

Since properness for morphisms of topological spaces is local on the target, we can extend this notion to topological Artin stacks as follows.

A $0$-Artin morphism $f : X \to Y$ is \emph{proper} if for any morphism $V \to Y$ with $V \in \Spc$, the base change $X \fibprod_Y V \to V$ is a proper morphism of spaces.

A $1$-Artin morphism $f : X \to Y$ is \emph{proper} if for any morphism $V \to Y$ with $V \in \Spc$, there exists an (automatically $0$-Artin) proper surjection $Z \twoheadrightarrow X \fibprod_Y V$ with $Z \in \Spc$ such that $Z \to V$ is proper.

\subsubsection{}\label{sssec:algstk}

The analogous definitions make sense also in the algebraic and complex analytic categories.

The complex analytification functor sends an algebraic space, locally of finite type over the field $k$ of complex numbers, to its space of $k$-points.
This extends uniquely to a colimit-preserving functor sending locally of finite type $k$-stacks in the algebraic category to stacks in the complex analytic or topological category.

This functor preserves Artin stacks, Artin morphisms, smooth morphisms, vector bundles, and proper representable morphisms.

While our definition of proper morphisms of $1$-Artin stacks is not the standard one in the algebraic category (cf. \cite[Tag~0CL4]{Stacks}), it follows from \cite[Thm.~1.1]{Olsson} and \cite[Tags~0CL3, 06TZ, 06U7, 0DTL]{Stacks} that they agree; hence proper morphisms of $1$-Artin stacks are also preserved by complex analytification.

When the discussion does not depend on any particular context, we will use the terms \emph{space} and \emph{stack} agnostically.
For example, when read in the algebraic category, \emph{space} will mean ``algebraic space locally of finite type over $k$''.

\subsection{Sheaves}

\subsubsection{}

Fix a commutative ring of coefficients $\Lambda$.
For a topological space $X$, we denote by $\Shv(X)$ the derived \inftyCat of sheaves of $\Lambda$-modules on $X$.\footnote{%
  Equivalently, $\Shv(X)$ is the \inftyCat of sheaves on $X$ with values in the derived \inftyCat of $\Lambda$-modules.
  More generally, one can take $\Lambda$ to be an $\Einfty$-ring spectrum, and $\Shv(X)$ to be the \inftyCat of sheaves of $\Lambda$-module spectra on $X$.
}
We extend $\Shv(-)$ to topological stacks by lisse extension, as in \cite[\S 8]{weavelisse}.
Thus for a topological stack $X$, we have
\begin{equation}\label{eq:ShvlimPt}
  \Shv(X) \simeq \lim_{(S,s)} \Shv(S)
\end{equation}
where the limit of \inftyCats is taken over pairs $(S,s)$ with $S\in\Spc$ and $s : S \to X$ a morphism.
If $X$ is Artin, it is enough to take pairs where $s$ is \emph{smooth}.

\subsubsection{}\label{sssec:algstkshv}

We may also perform the lisse extension within the algebraic or complex analytic categories.
For example, if $X$ is a stack in the algebraic category, we have
\[ \Shv(X) := \lim_{(S,s)} \Shv(S) \]
where the limit is taken over pairs $(S,s)$ with $S$ a locally of finite type algebraic space (or scheme) and $s : S \to X$ a morphism in the algebraic category.
For $X$ Artin, we may again require $s$ to be smooth.
See \cite[\S 3.2]{weavelisse}.

There is a natural equivalence $\Shv(X) \simeq \Shv(X^\mathrm{top})$, where $X^\mathrm{top}$ is the underlying topological stack \sssecref{sssec:algstk}.
This follows from the fact that $X \mapsto X^{\mathrm{top}}$ preserves colimits, and that $X \mapsto \Shv(X)$ sends colimits of stacks to limits of \inftyCats (see \cite[\S 3.2]{weavelisse}).

\subsubsection{}
The six functor formalism is summed up as follows.
By the discussion in \sssecref{sssec:algstk} and \sssecref{sssec:algstkshv}, it may be read in any of the topological, complex analytic, or algebraic categories according to the reader's preference.

\begin{thm}\label{thm:sixops}\leavevmode
  The assignment $X \mapsto \Shv(X)$ defines a topological weave in the sense of \cite{weaves,weavelisse}.
  In particular, we have:
  \begin{thmlist}
    \item\label{item:sixops/otimes}
    \emph{Tensor and Hom.}
    For every stack $X$, there is a closed symmetric monoidal structure on $\Shv(X)$.
    In particular, there are adjoint bifunctors $(\otimes, \uHom)$.
    The monoidal unit is the constant sheaf $\Lambda_X \in \Shv(X)$.

    \item\label{item:sixops/f^*}
    \emph{*-Functoriality.}
    For every morphism $f : X \to Y$, there is a pair of adjoint functors
      $$ f^* : \Shv(Y) \to \Shv(X),
      \quad f_* : \Shv(X) \to \Shv(Y). $$
    The functor $f^*$ (resp. $f_*$) is symmetric monoidal (resp. lax symmetric monoidal).
    The assignments $f \mapsto f^*$, $f \mapsto f_*$ are compatible with composition up to coherent homotopy.

    \item\label{item:sixops/f^!}
    \emph{!-Functoriality.}
    For every Artin morphism $f : X \to Y$, there is a pair of adjoint functors
      $$ f_! : \Shv(X) \to \Shv(Y),
      \quad f^! : \Shv(Y) \to \Shv(X). $$
    The assignments $f \mapsto f_!$, $f \mapsto f^!$ are compatible with composition up to coherent homotopy.
    
    \item\label{item:sixops/basechange}
    \emph{Base change.}
    For every Artin morphism $f : X \to Y$, the operation $f_!$ commutes with $*$-inverse image.
    Dually, $f^!$ commutes with $*$-direct image.

    \item\label{item:sixops/projection}
    \emph{Projection formula.}
    Let $f : X \to Y$ be an Artin morphism.
    For every object $\sF \in \Shv(X)$ and every object $\sG \in \Shv(Y)$, there are canonical isomorphisms
    \begin{align*}
      f_!(\sF) \otimes \sG &\to f_!(\sF \otimes f^*(\sG)),\\
      \uHom(f_!(\sF), \sG) &\to f_*\uHom(\sF, f^!(\sG)),\\
      f^!\uHom(\sF, \sG) &\to \uHom(f^*(\sF), f^!(\sG)).
    \end{align*}

    \item\label{item:sixops/alpha}
    \emph{Forgetting supports.}
    For every $1$-Artin morphism $f : X \to Y$ with proper diagonal, there is a natural transformation $\alpha_f : f_! \to f_*$ which is compatible with composition up to coherent homotopy.
    Moreover, it is invertible if $f$ is proper and representable.\footnote{%
      See also \sssecref{sssec:proper DM}.
    }

    \item\label{item:sixops/localization}
    \emph{Localization.}
    Let $X$ be an Artin stack and let $i : Z \to X$ and $j : U \to X$ be a complementary pair of closed and open immersions, respectively.\footnote{\label{fn:complement}%
      The open complement $U = X \setminus Z$ is by definition the subfunctor of $X$ whose $T$-points are $T \to X$ for which $T \fibprod_X Z = \initial$ (for every $T\in\Spc$).
      That is to say, it is the largest substack of $X$ that doesn't intersect $U$.
      It is easy to see that the morphism $U \to X$ is an open immersion.
    }
    Then there are exact triangles of functors
    \begin{align*}
      j_!j^* \to \id \to i_*i^*,\\
      i_*i^! \to \id \to j_*j^!.
    \end{align*}

    \item\label{item:sixops/homotopy}
    \emph{Homotopy invariance.}
    Let $X$ be a stack and $\pi : E \to X$ be the projection of a vector bundle.
    Then the natural transformations
    \begin{align*}
      \id &\to \pi_*\pi^*,\\
      \pi_!\pi^! &\to \id,
    \end{align*}
    are invertible.

    \item\label{item:sixops/Thom}
    \emph{Thom twist.}
    Let $f : X \to Y$ be a smooth Artin morphism.
    Denote by $\Delta : X \to X \fibprod_Y X$ and $\pr_2 : X \fibprod_Y X \to X$ the diagonal and second projection, respectively.
    Then the assignment $$\sF \mapsto \sF\vb{-T_f} := \Delta^! \pr_2^* (\sF)$$ determines a $\Shv(X)$-linear auto-equivalence of $\Shv(X)$, whose inverse we denote $\sF \mapsto \sF\vb{T_f}$.\footnote{\label{fn:Tf}%
      In the topological category, ``$T_f$'' is here just a symbol.
      The notation is meant to suggest that but one should think of $\Lambda_X\vb{T_f}$ as the \emph{Thom sheaf of the relative tangent microbundle} of $f$.
      In the algebraic or complex analytic (or even $C^1$) categories, $\Lambda_X\vb{T_f}$ can indeed be identified with the Thom sheaf of the relative tangent bundle $T_f$, via a deformation to the normal bundle argument.
    }

    \item\label{item:sixops/purity}
    \emph{Poincaré duality.}
    Let $f : X \to Y$ be an Artin morphism.
    If $f$ is smooth, then there is a canonical isomorphism of functors
    \begin{equation*}
      f^!(-) \simeq f^*(-)\vb{T_f}.
    \end{equation*}
    In particular, $\omega_{X/Y} := f^!(\Lambda_Y) \simeq \Lambda_X\vb{T_f}$ is $\otimes$-invertible.
  \end{thmlist}
\end{thm}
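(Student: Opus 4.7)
The plan is to reduce the theorem to a classical six-functor formalism on spaces, then invoke the lisse extension machinery of \cite[\S 8]{weavelisse}. Since $\Shv$ on stacks is defined by \eqref{eq:ShvlimPt}, the argument splits into two essentially independent tasks: (A) establish that $\Shv$ on spaces is a six-functor formalism satisfying the analogues of \itemref{item:sixops/otimes}--\itemref{item:sixops/purity} for morphisms of spaces, and that it satisfies smooth descent; (B) show that the lisse extension procedure propagates each of these structures and properties from spaces to all Artin stacks.

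For (A), in the topological category all operations and identities come from Grothendieck--Verdier duality for locally compact Hausdorff spaces as developed in \cite{KashiwaraSchapira, Volpe}: tensor and $\uHom$ form a closed monoidal structure on $\Shv(X)$, $(f^*, f_*)$ is the standard adjunction, $(f_!, f^!)$ is Verdier's duality adjunction, base change and the projection formula are the standard proper/smooth base change isomorphisms, $\alpha_f$ for proper $f$ is the classical equivalence $f_! \simeq f_*$, the localization triangles come from the open--closed decomposition of $\Lambda_X$, and homotopy invariance along vector bundles is classical. The Thom twist $\sF\vb{-T_f} := \Delta^! \pr_2^*(\sF)$ is an auto-equivalence by smooth base change along $\pr_2$, which reduces invertibility to the identity $\pr_2 \circ \Delta = \id$; the same computation yields Poincaré duality $f^! \simeq f^*\vb{T_f}$. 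Smooth descent on spaces holds because topological submersions locally admit sections; the algebraic and complex-analytic analogues are standard, as recorded in \sssecref{sssec:algstk} and \sssecref{sssec:algstkshv}.

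Task (B) is essentially formal given the machine of \cite{weavelisse}. The symmetric monoidal structure and $(f^*, f_*)$ adjunction extend to arbitrary morphisms of stacks by taking limits of \inftyCats. The $(f_!, f^!)$ adjunction is extended to Artin morphisms inductively on the Artin level, by choosing smooth atlases of source and target and gluing via smooth descent; base change and the projection formula propagate automatically since they are both limit-stable and descent-stable. Localization, homotopy invariance, the Thom twist, and Poincaré duality for smooth Artin morphisms can each be verified smooth-locally on source and target, hence reduce to the corresponding statement for spaces.

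The main obstacle is \itemref{item:sixops/alpha}, particularly invertibility of $\alpha_f$ for proper $1$-Artin morphisms. For proper representable $f$, $\alpha_f$ is the classical proper base change equivalence. For a $1$-Artin $f$ whose diagonal $\Delta_f$ is proper (but $f$ itself may be non-representable), one must assemble $\alpha_f$ from the case of $\Delta_f$ via a bar-type resolution of $f$ by its diagonal, in the spirit of \cite{LiuZheng}; coherence with composition is built into the resolution. Invertibility for a proper $1$-Artin $f$ then reduces, via the definition in \sssecref{sssec:algstk}, to the existence of a proper surjection $Z \twoheadrightarrow X \fibprod_Y V$ from a space with $Z \to V$ proper --- in the algebraic category via \cite[Thm.~1.1]{Olsson}, and in the topological and analytic categories by definition --- followed by proper descent to return to the representable case. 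It is precisely this interplay of properness, Artin level, and descent where the weave formalism of \cite{weaves, weavelisse} does the essential work.
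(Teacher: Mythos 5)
Your overall strategy coincides with the paper's: everything except \itemref{item:sixops/alpha} is delegated to the weave machinery of \cite{weaves,weavelisse} (construction of the weave on spaces, then lisse extension), with \itemref{item:sixops/localization} and \itemref{item:sixops/homotopy} reduced to spaces by smooth descent and the base-change compatibilities of \sssecref{sssec:more base change}. Your treatment of the invertibility of $\alpha_f$ for proper $1$-Artin $f$ is also essentially the paper's argument: reduce to $Y\in\Spc$ by smooth descent (using that $f_!$ and $f_*$ commute with smooth $*$-pullback), extract the proper atlas $Z\twoheadrightarrow X$ from the definition of properness, and conclude by proper descent along its \v{C}ech nerve together with compatibility of $\alpha$ with composition. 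One caveat you gloss over: the proper descent statement only holds after left completion (\propref{prop:desc}\itemref{item:desc/propdescent}), so you must check that this completion is harmless here; the paper arranges the reduction so that only \cite[Cor.~2.8]{Haine} for proper maps of spaces is needed.

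The genuine soft spot is the \emph{construction} of $\alpha_f$ for a non-representable $1$-Artin morphism with proper diagonal. A ``bar-type resolution of $f$ by its diagonal'' is not a construction, and the assertion that ``coherence with composition is built into the resolution'' is exactly the point that would need proof; moreover a simplicial resolution of $X$ would reintroduce the descent issues you are trying to avoid. The paper instead uses a direct adjunction trick: since $\Delta$ is representable and proper one has $\alpha_\Delta:\Delta_!\simeq\Delta_*$, and base change for the square with vertices $X\fibprod_Y X$, $X$, $X$, $Y$ gives
\[
  f^*f_!\;\simeq\;\pr_{2,!}\pr_1^*\;\xrightarrow{\ \unit\ }\;\pr_{2,!}\Delta_!\Delta^*\pr_1^*\;\simeq\;\id,
\]
where the unit is that of $(\Delta^*,\Delta_*)$ transported through $\alpha_\Delta$; transposing across the adjunction $(f^*,f_*)$ yields $\alpha_f:f_!\to f_*$ in one step, with no simplicial bookkeeping. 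If you want to keep your route, you must actually specify the resolution and prove the coherence; otherwise you should replace that sentence with the construction above.
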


See \sssecref{sssec:proofsixops} below for the proof.

\subsubsection{}

Let $f : X \to Y$ be a smooth morphism of relative dimension $d$ between Artin stacks.
An \emph{orientation} of $f$ is a trivialization $\Lambda_X\vb{T_f} \simeq \Lambda_X[d]$.
(Note that this depends on the coefficient ring $\Lambda$.)
Given an orientation, Poincaré duality reads $f^! \simeq f^*\vb{d}$.

For example, any smooth morphism of Artin stacks in the algebraic or complex analytic categories admits a canonical orientation.\footnote{%
  An orientation of $f$ amounts to a Thom isomorphism for $T_f$ (in view of Footnote~\ref{fn:Tf}).
  If $\Lambda$ is an $\Einfty$-ring spectrum equipped with a complex orientation, such as the Eilenberg--MacLane spectrum for an ordinary commutative ring, the complex orientation determines compatible Thom isomorphisms for all complex vector bundles.
}

\subsubsection{}
\label{sssec:more base change}

For any cartesian square of stacks
\[\begin{tikzcd}
  X' \ar{r}{g}\ar{d}{p}
  & Y' \ar{d}{q}
  \\
  X \ar{r}{f}
  & Y
\end{tikzcd}\]
where $q$ is an Artin morphism, we have in addition to the base change isomorphisms
\[ \Ex^*_! : q^*f_! \simeq g_!p^*, \quad \Ex^!_* : q^!f_* \simeq g_*q^!, \]
further exchange transformations
\[
  \Ex^*_* : q^*f_* \to g_*p^*,
  \quad \Ex^{*,!} : g^*q^! \to p^!f^*,
\]
see \cite[\S A.2]{weavelisse}, which also become invertible in certain situations:

\begin{enumerate}
  \item 
  Whenever we have the forgetting supports isomorphism $f_! \simeq f_*$ (and similarly for $g$), one deduces that $\Ex^*_*$ is invertible.
  (For example, when $f$ and $g$ are proper representable morphisms of $1$-Artin stacks.)
  
  \item
  When $p$ and $q$ are smooth and hence satisfy Poincaré duality, one deduces that $\Ex^*_*$ and $\Ex^{*,!}$ are invertible.
\end{enumerate}

\subsubsection{}

We also have the following descent statements for the weave $\Shv(-)$:

\begin{prop}[Descent]\label{prop:desc}
  Let $p : Y \to X$ be an Artin morphism and denote by $Y_\bullet$ its \v{C}ech nerve.
  \begin{thmlist}
    \item\label{item:desc/smdescent}
    \emph{Smooth descent.}
    Consider the canonical functor of \inftyCats
    \[
      \Shv(X) \to \on{Tot}(\Shv(Y_\bullet)),
    \]
    where $\on{Tot}(-)$ denotes the totalization (homotopy limit), with transition functors given by $*$-inverse image (resp. by $!$-inverse image).
    If $p$ is a smooth surjection, then this is an equivalence.

    \item\label{item:desc/propdescent}
    \emph{Proper descent.}
    Consider the canonical functor of \inftyCats
    \[
      \widehat{\Shv}(X) \to \on{Tot}(\widehat{\Shv}(Y_\bullet)),
    \]
    where the transition functors are given by $*$-inverse image and $\widehat{\Shv}(-)$ denotes the left completion\footnote{%
      So when $X$ is a topological space, $\widehat{\Shv}(X)$ is the completed derived \inftyCat of sheaves of abelian groups on $X$.
      See e.g. \cite[\S 1.2]{Haine} or \cite[\S C.5.9]{SAG}.
      The canonical functor $\Shv(X) \to \widehat{\Shv}(X)$ is an equivalence when $X$ is a CW complex, and is always an equivalence on eventually coconnective (homologically bounded above) objects.
    } of $\Shv(-)$ with respect to the cohomological t-structure\footnote{\label{fn:cohtstr}%
      extended to topological Artin stacks as in \cite[Prop.~4.3]{equilisse}, i.e. by imposing that smooth $*$-inverse image is t-exact.
    }.
    If $p$ is a proper representable surjection, then this is an equivalence.
  \end{thmlist}
\end{prop}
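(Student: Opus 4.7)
For part \itemref{item:desc/smdescent}, the strategy is to unwind the lisse extension formula \eqref{eq:ShvlimPt} directly. That formula already asserts that $\Shv$ carries colimits along smooth covers in $\Stk$ to limits of $\infty$-categories, and in particular that $\Shv(X) \simeq \Tot(\Shv(Y_\bullet))$ whenever $Y_\bullet$ is the \v{C}ech nerve of a smooth surjection $Y \twoheadrightarrow X$ — a cofinality comparison between the smooth site of $X$ and the simplicial diagram of smooth sites of the $Y_n$. The statement with $*$-inverse image transitions is thus essentially tautological. For the variant with $!$-inverse image transitions, I would apply Poincar\'e duality (\thmref{thm:sixops}\itemref{item:sixops/purity}) to identify $p_n^!(-) \simeq p_n^*(-)\vb{T_{p_n}}$; the Thom twists are $\otimes$-invertible and compose coherently along the face and degeneracy maps of $Y_\bullet$, so they can be absorbed into an auto-equivalence of the cosimplicial diagram, giving the analogous descent statement with $!$-transitions.

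For part \itemref{item:desc/propdescent}, the plan is a two-step reduction to classical proper \v{C}ech descent for topological spaces. First, using part \itemref{item:desc/smdescent} extended to the left completion, I would reduce to the case $X \in \Spc$: pick a smooth atlas $V \twoheadrightarrow X$ with $V \in \Spc$, let $V_\bullet$ be its \v{C}ech nerve, and observe that in each simplicial degree the base-changed covers $Y \fibprod_X V_n \twoheadrightarrow V_n$ remain proper representable surjections, with \v{C}ech nerves $Y_\bullet \fibprod_X V_n$. Because $p$ is representable and the $V_n$ are spaces, so are the $Y \fibprod_X V_n$, and the reduction lands us entirely inside the category of spaces.

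For a proper surjection of topological spaces $p : Y \to X$, I would then prove completed \v{C}ech descent by invoking proper base change (\thmref{thm:sixops}\itemref{item:sixops/basechange}) combined with the identification $p_! \simeq p_*$ coming from \itemref{item:sixops/alpha}. Concretely, the monad $p_*p^*$ can be computed stalkwise; over each $x \in X$ the assertion reduces to the well-known fact that the geometric realization of the \v{C}ech nerve of a nonempty (compact) space over a point is contractible. Passing to the left completion $\widehat{\Shv}$ then upgrades this bounded-below statement to an equivalence on unbounded objects, via convergence of the cohomological Postnikov tower.

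The principal obstacle I expect is the compatibility bookkeeping around left completion: one must verify that smooth descent genuinely survives the passage from $\Shv$ to $\widehat{\Shv}$ on Artin stacks. This hinges on smooth $*$-pullback being $t$-exact for the cohomological $t$-structure extended to stacks (in the sense of Footnote~\ref{fn:cohtstr}) and on local boundedness of cohomological dimension for smooth morphisms of spaces, both standard inputs but requiring care to thread through the simplicial and completion limits simultaneously.
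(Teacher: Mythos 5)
Your overall architecture matches the paper's. Part \itemref{item:desc/smdescent} is indeed obtained in \cite{weavelisse} essentially by construction of the lisse extension (with a cofinality comparison between the smooth site and the \v{C}ech diagram), and the $!$-variant follows by Poincar\'e duality exactly as you say. For part \itemref{item:desc/propdescent}, the paper performs the same reduction you propose: smooth descent plus the observation that left completion commutes with lisse extension (because smooth $*$-pullback is t-exact) reduces everything to a proper surjection of topological spaces. One small point you elide: for higher Artin $X$ the charts $V \fibprod_X V_{n}$ are only lower Artin stacks, so the reduction is really an induction on the Artin level rather than a one-step base change.

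The genuine gap is in your final step. The paper does not prove proper descent for spaces; it cites \cite[Cor.~2.8]{Haine}, and your proposed replacement argument does not work as stated. First, a stalkwise analysis of the unit $\sF \to \Tot(p_{\bullet,*}p_\bullet^*\sF)$ at best gives full faithfulness; essential surjectivity (comonadicity of $p^*$, via Barr--Beck--Lurie) is not addressed at all. Second, and more seriously, after proper base change the stalk at $x$ of the cosimplicial object is $\Gamma(Y_x^{\times(\bullet+1)}, \sF_x)$, i.e.\ \emph{sheaf} cohomology of the compact Hausdorff powers $Y_x^{n+1}$ with constant coefficients. The assertion that its totalization recovers $\sF_x$ is \emph{not} the contractibility of the geometric realization $\lvert Y_x^{\times(\bullet+1)}\rvert$: sheaf cohomology of a compact Hausdorff space (e.g.\ a Cantor set, or a totally disconnected fibre) does not agree with singular cochains on its weak homotopy type, so the ``well-known fact'' you invoke is the wrong input. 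The correct statement --- convergence of the \v{C}ech/descent totalization for hypercomplete sheaves on compact Hausdorff spaces --- is precisely the content of the cited result of Haine, so your sketch is circular at exactly the point where the paper outsources the work. (Your appeal to left completion for convergence of the $\Tot$-tower on uniformly coconnective objects is fine; that part of the bookkeeping is as you describe.)
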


\subsubsection{Proof of \thmref{thm:sixops} and \propref{prop:desc}}
\label{sssec:proofsixops}

See \cite[\S 7.1]{weavelisse} and \cite[\S 8.3]{weavelisse} for the construction of the weave $\Shv(-)$ in the algebraic and topological categories, respectively (and \cite[Cor.~3.4.9]{weavelisse} for the abstract statement).
In particular, parts~\itemref{item:sixops/otimes}, \itemref{item:sixops/f^*}, \itemref{item:sixops/f^!}, \itemref{item:sixops/basechange}, \itemref{item:sixops/projection}, \itemref{item:sixops/Thom}, and \itemref{item:sixops/purity} of \thmref{thm:sixops} are proven there, as well as \propref{prop:desc}\itemref{item:desc/smdescent}.
Using \propref{prop:desc}\itemref{item:desc/smdescent} and \sssecref{sssec:more base change}, parts~\itemref{item:sixops/localization} and \itemref{item:sixops/homotopy} of \thmref{thm:sixops} then reduce to the case of spaces.
Similarly, \propref{prop:desc}\itemref{item:desc/propdescent} reduces\footnote{%
  Note that left completion commutes with lisse extension for Artin stacks, since $*$-inverse image along smooth morphisms are t-exact, as in \eqref{eq:ShvlimPt}.
} to the case of proper morphisms of topological spaces, proven in \cite[Cor.~2.8]{Haine}.

\thmref{thm:sixops}\itemref{item:sixops/alpha} is only proven for representable morphisms in \cite[\S 8.3]{weavelisse}.
Let us construct $\alpha_f : f_! \to f_*$ for $f : X \to Y$ a $1$-Artin morphism whose diagonal $\Delta$ is proper.
Since $\Delta$ is representable, we have the isomorphism $\alpha_{\Delta} : \Delta_! \simeq \Delta_*$.
By base change (\thmref{thm:sixops}\itemref{item:sixops/basechange}) for the cartesian square
\[\begin{tikzcd}
  X\fibprod_Y X \ar{r}{\pr_2}\ar{d}{\pr_1}
  & X \ar{d}{f}
  \\
  X \ar{r}{f}
  & Y
\end{tikzcd}\]
we get the natural transformation
\begin{equation}
  f^*f_!
  \simeq \pr_{2,!}\pr_1^*
  \xrightarrow{\unit} \pr_{2,!}\Delta_!\Delta^*\pr_1^*
  \simeq \id,
\end{equation}
whence $\alpha_f : f_! \to f_*$ by transposition.

Now suppose $f : X \to Y$ is a proper morphism of $1$-Artin stacks and let us show that $\alpha_f$ is invertible.
Choose a smooth surjection $v : V \twoheadrightarrow Y$ where $V\in\Spc$.
By \propref{prop:desc}\itemref{item:desc/smdescent}, it is enough to show that $v^*f_! \to v^*f_*$ is invertible.
Since $f_!$ (resp. $f_*$) commutes with arbitrary (resp. smooth) $*$-inverse image \sssecref{sssec:more base change}, we may replace $Y$ by $V$ and thereby reduce to the case where $Y\in\Spc$.
By definition of properness of $f$, there exists a proper surjection $g : Z \twoheadrightarrow X$ where $Z\in\Spc$ and $g\circ f : Z \to Y$ is proper.
By \propref{prop:desc}\itemref{item:desc/propdescent} we have descent along the \v{C}ech nerve $g_\bullet : Z_\bullet \to X$ of $g$, so it will suffice to show that
\[\alpha_f : f_!g_{n,*}g_n^* \to f_*g_{n,*}g_n^*\]
is invertible for each $n$.
This follows from the compatibility of $\alpha$ with composition, the isomorphisms $\alpha_{{g_n},*} : g_{n,!} \to g_{n,*}$, and the isomorphisms $\alpha_{{f \circ g_n},*} : (f\circ g_n)_! \to (f\circ g_n)_*$.

\subsubsection{}
\label{sssec:proper DM}

Let $Y$ be $1$-Artin and $f : X \to Y$ a proper Deligne--Mumford\footnote{%
  i.e., representable by Deligne--Mumford stacks.
  See \cite[\S 14]{NoohiFound} for the definition of Deligne--Mumford stacks in the topological category.
} morphism.
Over every point $y \in Y$, the fibre $X \fibprod_Y \{y\}$ has finite discrete stabilizers; we assume their orders are all invertible in the commutative ring $R$.
In this situation, one can show that the ``forget supports'' transformation $\alpha_f : f_! \to f_*$ is still invertible on cohomologically bounded objects $\sF \in \Shv(X; R)$.

The basic case is when $f$ is the projection of the classifying stack $B_Y(G)$ of a relative group space $G$ over $Y$ whose structural morphism $G \to Y$ is finite étale\footnote{%
  in the topological category, read: a proper local homeomorphism
}, in which case $\alpha_f$ is invertible on $\Shv(X; R)$ as soon as the order of $G$ is invertible in $R$.

We sketch a proof in the general case.
It is enough to work in the topological category.
We begin by showing that $f_!$ is left t-exact on the left completion $\widehat{\Shv}(X; R)$.
We may assume that $Y$ is a space.
Consider the underlying space $\abs{X}$ of $X$, i.e. the set $\pi_0(X(\pt))$ with opens $\abs{U}$ for every open substack $U \sub X$.
Then $f$ factors through the projection $\pi : X \to \abs{X}$ and the morphism of spaces $\abs{f} : \abs{X} \to \abs{Y} \simeq Y$.
One can check that, when $f$ is proper, $\pi$ and $\abs{f}$ are proper.
We know that $\abs{f}_! \simeq \abs{f}_*$ is left t-exact, so by functoriality it will suffice to consider $\pi_!$.
On $\widehat{\Shv}$ the functors $x^*$ are t-exact and jointly conservative as $x$ ranges among points of $\abs{X}$ (see e.g. \cite[Obs.~1.30]{Haine}), so by the base change formula it is enough to show that $\pi_{x,!}$ is left t-exact for each fibre $\pi_x : X_x \to \{x\}$.
By \cite[Prop.~10.2(v)]{NoohiFound}, this fibre is the projection $BG \to \{x\}$ where $G$ is the stabilizer group space of $X$ at $x$.
Since the order of $G$ is invertible in $R$ by assumption, $\pi_{x,!}\simeq \pi_{x,*}$ is indeed left t-exact by above.

Now let us show that $\alpha_f$ is invertible on a cohomologically bounded object $\sF \in \Shv(X; R)$.
It is enough to consider the case of $\sF \in \Shv(X; R)^\heartsuit$ discrete.
By smooth base change we may assume $Y$ is a space.
By definition of properness of $f$, there exists a proper surjection $g : Z \twoheadrightarrow X$ where $Z\in\Spc$ and $g\circ f : Z \to Y$ is proper.
By \propref{prop:desc}\itemref{item:desc/propdescent} we have descent along the \v{C}ech nerve $g_\bullet : Z_\bullet \to X$ of $g$, i.e., $\sF \simeq \Tot(g_{\bullet,*}g_\bullet^*\sF)$.
Since totalizations in the $1$-categories $\Shv(X; R)^\heartsuit$ and $\Shv(Y; R)^\heartsuit$ can be computed as finite limits (equalizers), and $f_! : \Shv(X; R)^\heartsuit \to \Shv(Y; R)^\heartsuit$ is exact, we find that $\alpha_f : f_!(\sF) \to f_*(\sF)$ is the limit over $[n]\in\bDelta$ of the maps
\[\alpha_f : f_!g_{n,*}g_n^*(\sF) \to f_*g_{n,*}g_n^*(\sF). \]
But these are all invertible, as follows from the compatibility of $\alpha$ with composition, the isomorphisms $\alpha_{{g_n},*} : g_{n,!} \to g_{n,*}$, and the isomorphisms $\alpha_{{f \circ g_n},*} : (f\circ g_n)_! \to (f\circ g_n)_*$.

\section{Homology}

\subsection{Definition}

As before, we fix a commutative ring\footnote{%
  and also as before, one can take more generally an $\Einfty$-ring spectrum
} of coefficients $\Lambda$, and work in either the topological, complex analytic, or algebraic category.
Let $f : X \to S$ be an Artin morphism of stacks and $\Lambda_S \in \Shv(S)$ the constant sheaf.

We define the following objects of $\Shv(S)$:
\begin{defnlist}
  \item\emph{Cochains:} $\CCOH(X_{/S}; \Lambda) := f_*f^*(\Lambda_S)$.
  \item\emph{Compactly supported cochains:} $\CCOHC(X_{/S}; \Lambda) := f_!f^*(\Lambda_S)$.
  \item\emph{Borel--Moore chains:} $\CHOMBM(X_{/S}; \Lambda) := f_*f^!(\Lambda_S)$.
  \item\emph{Chains:} $\CHOM(X_{/S}; \Lambda) := f_!f^!(\Lambda_S)$.
\end{defnlist}
We denote their respective complexes of (derived) global sections by
\[
  \Ccoh(X_{/S}; \Lambda), \quad
  \Ccohc(X_{/S}; \Lambda), \quad
  \CBM(X_{/S}; \Lambda), \quad
  \Chom(X_{/S}; \Lambda),
\]
respectively, and their respective cohomology groups by
\[
  \H^*(X_{/S}; \Lambda),
  \quad \H^*_{\mrm{c}}(X_{/S}; \Lambda),
  \quad \H_{-*}^\BM(X_{/S}; \Lambda),
  \quad \H_{-*}(X_{/S}; \Lambda).
\]
We leave the coefficient $\Lambda$ implicit when there is no risk of ambiguity.
When $S=\pt$, we omit the decoration $_{/S}$.
By adjunction, we may write
\begin{align*}
  \Ccoh(X_{/S}; \Lambda) &\simeq \Gamma(X, \Lambda_X) \simeq \Ccoh(X; \Lambda),\\
  \CBM(X_{/S}; \Lambda) &\simeq \Gamma(X, \omega_{X/S})
\end{align*}
where $\Lambda_X \simeq f^*(\Lambda_S)$ and $\omega_{X/S} := f^!(\Lambda_S)$.

\subsection{Operations}

The various compatibilities between the six operations give rise to the expected operations on (Borel--Moore) homology.
We record these for the sheaves $\CHOM(X_{/S})$ and $\CHOMBM(X_{/S})$, leaving the ring of coefficients $\Lambda$ implicit in the notation.

\subsubsection{Forgetting supports}

If $f: X \to S$ is a morphism of $1$-Artin stacks with proper diagonal, 
then $\alpha_f : f_! \to f_*$ induces a map
\[ \CHOM(X_{/S}) \to \CHOMBM(X_{/S}) \]
which is invertible if $X \to S$ is proper and Deligne--Mumford.

\subsubsection{Products}

Let $f : X \to Y$ and $g : Y \to S$ be Artin morphisms.
We have the operations of \emph{composition product}:
\begin{align}
  \circ &: g_*\CHOM(X_{/Y}) \otimes \CHOM(Y_{/S}) \to \CHOM(X_{/S}),\label{eq:caphom}\\
  \circ &: g_*\CHOMBM(X_{/Y}) \otimes \CHOMBM(Y_{/S}) \to \CHOMBM(X_{/S}).\label{eq:capBM}
\end{align}
Taking $f=g=\id$, these become the \emph{cup product} on cochains:
\begin{equation}
  \cup : \CCOH(Y) \otimes \CCOH(Y) \to \CCOH(Y),
\end{equation}
via which $\CCOH(Y)$ becomes an $\Einfty$-algebra in $\Shv(Y)$.
Taking $f=\id$, they become the \emph{cap products}:
\begin{align}
  \cap &: g_*\CCOH(Y) \otimes \CHOMBM(Y_{/S}) \to \CHOMBM(Y_{/S}),\\
  \cap &: g_*\CCOH(Y) \otimes \CHOM(Y_{/S}) \to \CHOM(Y_{/S}),
\end{align}
via which $\CHOMBM(Y_{/S})$ and $\CHOM(Y_{/S})$ are modules over $g_*\CCOH(Y)$ for any Artin morphism $g : Y \to S$.

We recall the construction of \eqref{eq:caphom}; that of \eqref{eq:capBM} is similar.
It comes from a natural transformation
\begin{equation}
  g_*f_!f^!g^*(-) \otimes g_!g^!(-)
  \to (g \circ f)_! (g \circ f)^!(-)
\end{equation}
defined as the composite
\begin{align*}
  g_*f_!f^!g^*(-) \otimes g_!g^!(-)
  \simeq &~g_!(g^*g_*f_!f^!g^*(-) \otimes g^!(-))\\
  \to &~g_!(g^!(-) \otimes f_!f^!g^*(-))\\
  \simeq &~g_!f_!(f^*g^!(-) \otimes f^!g^*(-))\\
  \to &~g_!f_!(f^!(g^*(-) \otimes g^!(-)))\\
  \to &~g_!f_!(f^!g^!(-))\\
  \simeq &~(g \circ f)_! (g \circ f)^!(-),
\end{align*}
where the (iso)morphisms are: the projection formula for $g_!$; the unit $\id \to g^*g_*$; the projection formula for $f_!$; the exchange transformation $\Ex^{*,!}_{\otimes} : f^*(-) \otimes f^!(-) \to f^!(-\otimes -)$; and the exchange transformation $\Ex^{*,1}_{\otimes} : g^*(-) \otimes g^!(-) \to g^!(-\otimes -)$.
(See \cite[A.1.2]{weavelisse} for the definition of the latter two.)

\subsubsection{Functoriality: change of base}

Let $p : S' \to S$ be a morphism of stacks.
For any Artin morphism $X \to S$, the unit $\id \to p_*p^*$ induces maps
\begin{align*}
  \CHOM(X_{/S}) &\to \CHOM({X_{S'}}_{/S'}),\\
  \CHOMBM(X_{/S}) &\to \CHOMBM({X_{S'}}_{/S'}),
\end{align*}
where $X_{S'} = X\fibprod_S S'$.

\subsubsection{Functoriality: direct image}

Let $f : X \to Y$ be a morphism of relatively Artin stacks over a stack $S$.
The counit $f_!f^! \to \id$ induces a map
\[ f_* : \CHOM(X_{/S}) \to \CHOM(Y_{/S}). \]
If $f$ is a \emph{proper} Deligne--Mumford morphism of $1$-Artin stacks, so that $f_*\simeq f_!$, then $f_*f^! \simeq f_!f^! \to \id$ induces a map
\[
  f_* : \CHOMBM(X_{/S}) \to \CHOMBM(Y_{/S}).
\]

\subsubsection{Functoriality: Gysin}
\label{sssec:Gys}

Let $f : X \to Y$ be a smooth Artin morphism of relative dimension $d$ over a stack $S$.
The Poincaré duality isomorphism $f^*\vb{T_f} \simeq f^!$ yields a natural transformation $\id \to f_*f^* \simeq f_*f^!\vb{-T_f}$, whence a map
\[ f^! : \CHOMBM(Y_{/S}) \to \CHOMBM(X_{/S})\vb{-T_f}, \]
where by abuse of notation we write, with $p : X \to S$ the structural morphism,
\[ \CHOMBM(X_{/S})\vb{-T_f} := p_*\vb{-T_f}p^!(\Lambda_S). \]
Recall that we may trivialize $\vb{-T_f} \simeq [-d]$ given a suitable orientation of $f$.

Combining this with the isomorphism $\alpha_f : f_! \to f_*$ when $f$ is moreover proper and Deligne--Mumford with $Y$ $1$-Artin, we have the natural transformation $\id \to f_*f^* \simeq f_!f^!\vb{-T_f}$ which yields a map
\[ f^! : \CHOM(Y_{/S}) \to \CHOM(X_{/S})\vb{-T_f}. \]

\subsubsection{Fundamental class}

Let $f : X \to S$ be a smooth Artin morphism.
The \emph{relative fundamental class}
\[ [X_{/S}] \in \CHOMBM(X_{/S})\vb{-T_f} \]
is the image of $1 \in \CCOH(S) \simeq \CHOMBM(S_{/S})$ by the Gysin map $f^! : \CHOMBM(S_{/S}) \to \CHOMBM(X_{/S})\vb{-T_f}$.\footnote{%
  Here $x \in \CHOMBM(X)\vb{-T_f}$ is shorthand for any of the following equivalent things: a morphism $x : \Lambda_X \to \CHOMBM(X)\vb{-T_f}$ in $\Shv(X)$; a morphism $x : \Lambda \to \CHOMBM(X)\vb{-T_f}$ in the derived \inftyCat of $\Lambda$-modules; an object of the \inftyGrpd underlying the complex $\CBM(X)\vb{-T_f}$.
}.

\subsubsection{Virtual functoriality}

Let us work in the $C^\infty$-category; we refer to \cite{Steffens} for background on (derived) $C^\infty$-stacks.
Given a derived $C^\infty$-stack $X$ and $\pi : E \to X$ a $C^\infty$-vector bundle, we consider the construction\footnote{%
  where $\Shv(-)$ and the various operations are defined via underlying topological stacks
}
\[ \sF\vb{-E} := 0^! \pi^*(\Lambda_X) \]
as an endofunctor in $\sF \in \Shv(X)$, where $0 : X \to E$ denotes the zero section.
This is an auto-equivalence (as can be checked locally) and we denote its inverse by $\sF \mapsto \sF\vb{E}$.
As in \cite[\S 2.6]{weaves}, compatibility with exact sequences of vector bundles implies that this extends to a canonical homomorphism
\[ \on{K_0}(\on{Perf}(X)) \to \Shv(X) \]
from the Grothendieck group of perfect complexes on $X$ to the Picard group of $\otimes$-invertible objects of $\Shv(X)$.

Now let $f : X \to Y$ be a \emph{quasi-smooth morphism of derived $C^\infty$-stacks} (see again \cite{Steffens} for a reference for quasi-smoothness in this context).
The relative cotangent complex is perfect and gives rise to a Thom twist $\vb{T_f^{\mathrm{vir}}}$ on $\Shv(X)$.
One can define in this situation a canonical natural transformation, the \emph{virtual Gysin transformation}
\[
  \gys_f : f^*(-)\vb{T_f^{\mathrm{vir}}} \to f^!(-),
\]
which reduces to the Poincaré duality isomorphism $f^! \simeq f^*(-)\vb{T_f}$ when $f$ is smooth.
See \cite[\S 3.1]{virtual} for the construction of $\gys_f$ in the algebraic category; the same works word-for-word in the complex analytic and $C^\infty$-categories, see e.g. \cite[\S 4]{PortaYuGW}.\footnote{%
  In fact, the construction does not require the full data of a derived structure on $f : X \to Y$.
  The latter determines in particular a closed immersion of the intrinsic normal cone $\mathfrak{C}_{X/Y}$ into the virtual normal bundle $N^{\mathrm{vir}}_{X/Y}$, and this linear shadow of the derived structure is enough: see \cite[Varnt.~2.4]{KhanCohdag} or \cite[\S 3.3]{virtual}.
}

Now, as in \sssecref{sssec:Gys}, $\gys_f$ gives rise to virtual Gysin maps:
\[ f^!_{\mathrm{vir}} : \CHOMBM(Y_{/S}) \to \CHOMBM(X_{/S})\vb{-T_f^{\mathrm{vir}}} \]
when $X$ and $Y$ are defined over some base $S$.
In particular, taking $Y=S$ we get the (relative) \emph{virtual fundamental class}
\[
  [X_{/S}]^{\mathrm{vir}} := f^!_{\mathrm{vir}}(1) \in \CHOMBM(X_{/S})\vb{-T_{X/S}^{\mathrm{vir}}}.
\]
If $f$ is moreover proper and Deligne--Mumford with $Y$ $1$-Artin, we also have
\[ f^!_{\mathrm{vir}} : \CHOM(Y_{/S}) \to \CHOM(X_{/S})\vb{-T_f^{\mathrm{vir}}}. \]

For example, the above discussion applies to any quasi-smooth Artin morphism of derived stacks in the algebraic or complex analytic categories (see e.g. \cite{stacksncts} or \cite{PortaYu} for some background on derived stacks in the respective contexts).
In that case, the canonical orientation of $f$ yields a canonical trivialization $\vb{T_f^{\mathrm{vir}}} \simeq [2d]$ where $d$ is the relative complex dimension.\footnote{%
  If $\Lambda$ is an $\Einfty$-ring spectrum, as opposed to an ordinary commutative ring, we need a complex orientation on $\Lambda$ here.
}

\subsection{Properties}

The following statements immediately follow from various parts of \thmref{thm:sixops}.

\begin{thm}\label{thm:props}
  Let $S$ be a stack and $f : X \to S$ an Artin morphism.
  \begin{thmlist}
    \item\label{item:loc}
    \emph{Localization.}
    For any closed immersion $i : Z \hook X$ with complementary open immersion $j : U \hook X$, there are exact triangles in $\Shv(S)$.
    \begin{align*}
      \CHOMBM(Z_{/S}) \xrightarrow{i_*} \CHOMBM(X_{/S}) \xrightarrow{j^!} \CHOMBM(U_{/S}),\\
      \CHOM(U_{/S}) \xrightarrow{j_*} \CHOM(X_{/S}) \to \CHOM((X,U)_{/S}),
    \end{align*}
    where $\Chom((X,U)_{/S}) = f_!i_!i^*f^!(\Lambda_S)$ is the complex of relative chains on $(X,U)$ over $S$.

    \item\label{item:htp}
    \emph{Homotopy invariance.}
    For any vector bundle $\pi : E \to X$ of rank $r$, the maps in $\Shv(S)$
    \begin{align*}
      &\pi^! : \CHOMBM(X_{/S}) \to \CHOMBM(E_{/S})\vb{-T_\pi},\\
      &\pi_* : \CHOM(E_{/S}) \to \CHOM(X_{/S})
    \end{align*}
    are invertible.

    \item\label{item:poinc}
    \emph{Poincaré duality.}
    If $f : X \to S$ is smooth, cap product with the fundamental class induces a canonical isomorphism in $\Shv(S)$.
    \[
      (-) \cap [X_{/S}] : \CCOH(X_{/S}) \to \CHOMBM(X_{/S})\vb{-T_{X/S}}.
    \]

    \item\label{item:ayog1}
    \emph{Smooth descent.}
    For any smooth surjective Artin morphism $p : Y \to X$ with \v{C}ech nerve $Y_\bullet$, there are canonical isomorphisms in $\Shv(S)$
    \begin{align*}
      \CHOMBM(X_{/S})
      &\simeq \lim_{[n]\in\bDelta} \CHOMBM({Y_n}_{/S})\vb{-T_{Y_n/X}},\\
      \CHOM(X_{/S})
      &\simeq \colim_{[n]\in\bDelta^\op} \CHOM({Y_n}_{/S}).
    \end{align*}
    
    \item\label{item:bphpjq}
    \emph{Proper descent.}
    Suppose that the relative dualizing complex $\omega_{X/S} = f^!(\Lambda_S)$ is eventually coconnective (e.g., $X \to S$ is a relative CW complex locally on the source and target).
    For any proper surjective morphism $p : Y \to X$ with \v{C}ech nerve $Y_\bullet$, there are canonical isomorphisms
    \begin{align*}
      \CHOMBM(X_{/S})
      &\simeq \colim_{[n]\in\bDelta^\op} \CHOMBM({Y_n}_{/S}),\\
      \CHOM(X_{/S})
      &\simeq \colim_{[n]\in\bDelta^\op} \CHOM({Y_n}_{/S})
    \end{align*}
    in $\Shv(S)$.
  \end{thmlist}
\end{thm}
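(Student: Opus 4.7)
The plan is to derive each of the five parts by applying either $f_*$ (for the Borel--Moore variants) or $f_!$ (for the chains variants) to a canonical identity or triangle in $\Shv(X)$ provided by \thmref{thm:sixops} or \propref{prop:desc}, evaluated at $\omega_{X/S} = f^!(\Lambda_S)$. Throughout I freely use the compatibilities $(fg)^! \simeq g^!f^!$, $(fg)_* \simeq f_*g_*$, and $(fg)_! \simeq f_!g_!$ from \thmref{thm:sixops}.

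For~\itemref{item:loc}, apply $f_*$ to the triangle $i_*i^! \to \id \to j_*j^!$ of \thmref{thm:sixops}\itemref{item:sixops/localization} evaluated at $\omega_{X/S}$, identifying $i^!\omega_{X/S} \simeq \omega_{Z/S}$ and $j^!\omega_{X/S} \simeq j^*\omega_{X/S} = \omega_{U/S}$ (the latter since $j$ is étale, hence a smooth morphism of relative dimension zero); for chains, apply $f_!$ to $j_!j^* \to \id \to i_*i^*$, noting that $i_*i^* \simeq i_!i^*$ by properness of the closed immersion $i$. For~\itemref{item:htp}, the equivalences $\id \simeq \pi_*\pi^*$ and $\pi_!\pi^! \simeq \id$ of \thmref{thm:sixops}\itemref{item:sixops/homotopy}, applied to $\omega_{X/S}$ and pushed forward by $f_*$ resp.\ $f_!$, give the two claims, after rewriting $\pi^*\omega_{X/S} \simeq \pi^!\omega_{X/S}\vb{-T_\pi}$ via purity in the BM variant. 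For~\itemref{item:poinc}, purity (\thmref{thm:sixops}\itemref{item:sixops/purity}) gives $\omega_{X/S} \simeq \Lambda_X\vb{T_{X/S}}$ in $\Shv(X)$, so pushing forward by $f_*$ and twisting yields $\CHOMBM(X_{/S})\vb{-T_{X/S}} \simeq \CCOH(X_{/S})$; unpacking the definitions of the cap product and of $[X_{/S}] := f^!(1)$ confirms that this isomorphism is indeed $(-) \cap [X_{/S}]$.

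The remaining two parts use descent. For~\itemref{item:ayog1}, apply $f_*$ to the $*$-descent $\omega_{X/S} \simeq \Tot(p_{n,*}p_n^*\omega_{X/S})$ from \propref{prop:desc}\itemref{item:desc/smdescent}, rewriting $p_n^*\omega_{X/S} \simeq p_n^!\omega_{X/S}\vb{-T_{p_n}} = \omega_{Y_n/S}\vb{-T_{Y_n/X}}$ via purity; for chains, invoke the dual codescent $\omega_{X/S} \simeq \colim_n p_{n,!}p_n^!\omega_{X/S}$ (from smooth descent by monadicity of $(p_{n,!}, p_n^!)$ in the presentable setting) and apply $f_!$, which preserves colimits. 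For~\itemref{item:bphpjq}, the eventual coconnectivity of $\omega_{X/S}$ places it in the image of $\Shv(X) \hookrightarrow \widehat{\Shv}(X)$, so \propref{prop:desc}\itemref{item:desc/propdescent} applies. Using $p_{n,!} \simeq p_{n,*}$ for the proper morphisms $p_n$, the chains case proceeds via the codescent $\omega_{X/S} \simeq \colim_n p_{n,!}p_n^!\omega_{X/S}$ followed by $f_!$. I expect the main obstacle to be the BM case, where one needs $\colim_n f_*p_{n,!}p_n^!\omega_{X/S} \simeq f_*\omega_{X/S}$ even though $f_*$ does not preserve arbitrary colimits; the resolution should exploit that, under eventual coconnectivity and $p_{n,!} \simeq p_{n,*}$, this particular simplicial colimit is effective in a Barr--Beck--Lurie sense and admits a dual description as a limit that $f_*$ does preserve.
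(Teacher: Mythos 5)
Your treatment of parts \itemref{item:loc}--\itemref{item:ayog1} is correct and is exactly the intended derivation (the paper offers nothing beyond the remark that these statements ``immediately follow'', so there is no finer argument to compare against): evaluate the relevant triangle or equivalence from \thmref{thm:sixops} at $\omega_{X/S}$, apply $f_*$ or $f_!$, and use purity to convert $*$- into $!$-pullbacks along smooth maps. For the chains half of \itemref{item:ayog1} your appeal to codescent is legitimate because \propref{prop:desc}\itemref{item:desc/smdescent} explicitly supplies the \emph{$!$-inverse-image} form of smooth descent, whose passage to left adjoints yields the bar resolution $\colim_n p_{n,!}p_n^!\simeq\id$, which $f_!$ then preserves.

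Part \itemref{item:bphpjq} is where the real content lies, and your sketch does not close it; there are two gaps. First, the codescent equivalence $\omega_{X/S}\simeq\colim_n p_{n,!}p_n^!\,\omega_{X/S}$ is asserted but not derived: \propref{prop:desc}\itemref{item:desc/propdescent}, unlike its smooth counterpart, only provides descent with $*$-inverse-image transition functors (and only on left completions), and for a proper, non-smooth $p$ there is no purity isomorphism converting $p_n^*$ into $p_n^!$. One genuinely has to pass from the cobar resolution $\Lambda_X\simeq\Tot\bigl(p_{\bullet,*}p_\bullet^*\Lambda_X\bigr)$ to the bar resolution for the comonad $p_!p^!$; the mechanism is the identification $p_{n,!}p_n^!(-)\simeq\uHom(p_{n,!}\Lambda_{Y_n},-)$ (projection formula plus $p_{n,!}\simeq p_{n,*}$), which exhibits your simplicial object as the $\uHom(-,\omega_{X/S})$-dual of the cobar cosimplicial object; the limit--colimit interchange this requires is precisely where the coconnectivity hypothesis on $\omega_{X/S}$ and the convergence of the cobar resolution (its terms are uniformly coconnective, $p_{n,*}$ being left t-exact) must enter. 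Second, for the Borel--Moore half you correctly flag that $f_*$ need not commute with the realization, but ``admits a dual description as a limit that $f_*$ does preserve'' is not an argument. The instinct is sound: the isomorphism $\uHom(f_!(-),\Lambda_S)\simeq f_*\uHom(-,f^!\Lambda_S)$ gives $\CHOMBM({Y_n}_{/S})\simeq\uHom\bigl(f_!p_{n,*}\Lambda_{Y_n},\Lambda_S\bigr)$, so the claim dualizes to one about compactly supported cochains --- but the same interchange problem reappears there and must again be controlled by the coconnectivity and convergence hypotheses. Until these interchanges are justified, \itemref{item:bphpjq} is not proved.
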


\subsection{Homology and homotopy types}
\label{ssec:homotype}

We define the \emph{homotopy type} of a topological stack, cf. \cite{Simpson,Blanc,NoohiType}.
Denote by $\Anima$ the \inftyCat of homotopy types (a.k.a. \inftyGrpds).
We have the tautological functor
\[\Spc \to \Anima, \quad X \mapsto \abs{X},\]
which exhibits its target as the $\infty$-categorical localization at weak homotopy equivalences.
By left Kan extension, this extends uniquely to a colimit-preserving functor
\[\Spc \to \Anima, \quad X \mapsto \abs{X}.\]
In particular, $\abs{X}$ is canonically presented as the homotopy colimit
\[ \abs{X} \simeq \colim_{(S,s)} \abs{S} \]
taken over pairs $(S,s)$ with $S\in\Spc$ and $s : S \to X$ a morphism.

For a topological stack $X$, the complex of chains $\Chom(X)$ can be described in terms of its homotopy type $\abs{X} \in \Anima$.
Recall first of all that on topological spaces, the functor $X \mapsto \Chom(X)$ descends to a functor
\[\Chom^{\Anima}(-) : \Anima \to D(\Lambda)\]
valued in the derived \inftyCat of $\Lambda$-modules (where $\Lambda$ is the ring of coefficients), since it inverts weak homotopy equivalences.

The following is a tautological reformulation of smooth codescent for $\Chom(-) : \Stk \to D(\Lambda)$ (\thmref{thm:props}\itemref{item:ayog1}).

\begin{prop}
  For every topological stack $X$, there is a canonical natural isomorphism
  \[ \Chom(X) \simeq \Chom^{\Anima}(\abs{X}) \]
  in $D(\Lambda)$.
\end{prop}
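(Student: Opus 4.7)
The plan is to deduce the isomorphism from smooth codescent (\thmref{thm:props}\itemref{item:ayog1}) by matching the resulting \v{C}ech-type colimit presentation of $\Chom(X)$ against the colimit presentation built into the definition of $\abs{-}$. Both sides will be exhibited as $\colim_{[n]\in\bDelta^\op} \Chom(Y_n)$ for a common simplicial diagram $Y_\bullet$ of topological spaces.

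Concretely, I would first choose a smooth surjection $p : Y \twoheadrightarrow X$ with $Y \in \Spc$ and form its \v{C}ech nerve $Y_\bullet$; all terms $Y_n$ are themselves topological spaces by iterated representability of the diagonal. For each $n$, the identification $\Chom(Y_n) \simeq \Chom^{\Anima}(\abs{Y_n})$ holds by the very construction of $\Chom^{\Anima}$ as the localization of $\Chom : \Spc \to D(\Lambda)$ along $\Spc \to \Anima$. Smooth codescent then supplies
\[ \Chom(X) \simeq \colim_{[n]\in\bDelta^\op} \Chom(Y_n) \simeq \colim_{[n]\in\bDelta^\op} \Chom^{\Anima}(\abs{Y_n}). \]
On the other side, $\abs{-} : \Stk \to \Anima$ is colimit-preserving by its construction as a left Kan extension, and $p$ is an effective epimorphism in $\Stk$, so $X \simeq \colim_{[n]\in\bDelta^\op} Y_n$ and hence $\abs{X} \simeq \colim_{[n]\in\bDelta^\op} \abs{Y_n}$ in $\Anima$. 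Since $\Chom^{\Anima}$ itself preserves colimits, we obtain $\Chom^{\Anima}(\abs{X}) \simeq \colim_{[n]\in\bDelta^\op} \Chom^{\Anima}(\abs{Y_n})$; comparing with the previous display yields the claimed isomorphism.

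The main subtle point will be verifying that the isomorphism so constructed is canonical, i.e., independent of the chosen atlas and natural in $X$. This reduces to a standard cofinality argument using that any two smooth atlases admit a common refinement; more abstractly, one phrases both $X \mapsto \Chom(X)$ and $X \mapsto \Chom^{\Anima}(\abs{X})$ as the essentially unique colimit-preserving extensions along $\Spc \hookrightarrow \Stk$ of the classical chains functor on $\Spc$, where the two constructions tautologically agree. For non-Artin topological stacks $X$ where no global smooth atlas exists, one interprets the left-hand side as this Kan extension and the same comparison of colimit presentations delivers the result.
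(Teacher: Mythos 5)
Your final paragraph is essentially the paper's proof: the paper observes that smooth codescent (\thmref{thm:props}\itemref{item:ayog1}) implies $\Chom(-) : \Stk \to D(\Lambda)$ is left Kan extended from $\Spc$, while $\Chom^\Anima \circ \abs{-}$ is a composite of colimit-preserving functors and so is likewise the Kan extension of its restriction to $\Spc$; the two restrictions agree by construction of $\Chom^\Anima$, so the Kan extensions agree. So the abstract version of your argument is correct and is in fact what the paper does.

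Your concrete \v{C}ech version, however, has a genuine gap. You assert that all terms $Y_n$ of the \v{C}ech nerve of a smooth atlas $p : Y \twoheadrightarrow X$ are topological spaces ``by iterated representability of the diagonal.'' This is only true when $X$ is $1$-Artin. If $X$ is $k$-Artin with $k \ge 2$, its diagonal is merely $(k-1)$-Artin, so $Y_1 = Y \fibprod_X Y$ (and the higher $Y_n$) are $(k-1)$-Artin stacks, not spaces, and the identification $\Chom(Y_n) \simeq \Chom^\Anima(\abs{Y_n})$ is not yet available. One can repair this by induction on the Artin degree (apply the proposition to each $Y_n$), or by passing to a hypercover, but as written the step is unjustified — and it is precisely this kind of bookkeeping that the paper sidesteps by working directly at the level of Kan extensions rather than with a single atlas. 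Also worth flagging: in your abstract formulation you say the two constructions ``tautologically agree'' as colimit-preserving extensions, but it is tautological only on the $\Chom^\Anima \circ \abs{-}$ side; that $\Chom$ itself is the colimit-preserving extension of its restriction to $\Spc$ is exactly the content of smooth codescent, not a tautology.
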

\begin{proof}
  By construction, the left-hand triangle and outer composite in the following diagram commute:
  \[\begin{tikzcd}
    \Spc \ar[hookrightarrow]{r}\ar[swap]{rd}{\abs{-}}
    & \Stk \ar{r}{\Chom}\ar[swap]{d}{\abs{-}}
    & D(\Lambda)\\
    & \Anima \ar[swap]{ru}{\Chom^{\Anima}}
    & 
  \end{tikzcd}\]
  \thmref{thm:props}\itemref{item:ayog1} implies that $\Chom(-) : \Stk \to D(\Lambda)$ is also left Kan extended from $\Spc$.
  It follows that the right-hand triangle commutes if and only if the outer one does.
\end{proof}

\subsection{Relative homology}
\label{ssec:relhom}

If $S$ is a compact topological manifold, then may describe the complex of relative chains $\Chom(X_{/S}) = a_{S,*} f_!f^!(\Lambda_S)$ in more ``absolute'' terms:
\[ \Chom(X_{/S}) \simeq a_{S,!} f_!f^!(\omega_S\vb{-T_{S}}) \simeq \Chom(X)\vb{-T_S}. \]
More generally, using chains on pairs as in \thmref{thm:props}\itemref{item:loc}, we have:

\begin{lem}\label{lem:v0b01}
  Let $S$ be a topological manifold of dimension $d$ and $f : X \to S$ a topological Artin stack over $S$.
  There is a canonical isomorphism in $D(\Lambda)$
  \[
    \Chom(X_{/S})
    \simeq \lim_{K \sub S} \Chom(X, X\setminus X_K)\vb{-T_S},
  \]
  where the limit is over compact subsets $K \sub S$ and $X_K = X\fibprod_S K \sub X$.\footnote{%
    Since $S$ is Hausdorff (by convention), $K \hook S$ is a closed immersion.
    Hence $X_K \hook X$ is a closed immersion by base change, and $X \setminus X_K$ is the complementary open immersion as in Footnote~\ref{fn:complement}.
  }
\end{lem}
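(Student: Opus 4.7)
The plan is to apply the localization triangle for the closed immersion $i_K : X_K \hook X$ and its open complement $j_K : X \setminus X_K \hook X$, take the limit over the directed poset of compact subsets $K \sub S$, and argue that the ``chains-at-infinity'' term vanishes. The remaining ``pair'' term is then identified with the twisted absolute chains by a direct computation using Poincaré duality for $S$.

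Concretely, for each compact $K$, the localization triangle (\thmref{thm:props}\itemref{item:loc}) gives a functorial exact triangle in $D(\Lambda)$:
\[
  \Chom((X\setminus X_K)_{/S}) \to \Chom(X_{/S}) \to \Chom((X, X\setminus X_K)_{/S}).
\]
Taking $\lim_K$ and using that the middle term is constant in $K$ reduces the claim to two substeps: (i) showing $\lim_K \Chom((X \setminus X_K)_{/S}) \simeq 0$, and (ii) identifying $\Chom((X, X\setminus X_K)_{/S}) \simeq \Chom(X, X\setminus X_K)\vb{-T_S}$, where on the right the twist is interpreted as tensoring with $f^*\Lambda_S\vb{-T_S}$ inside the absolute $!$-pushforward.

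For (ii), the key observation is that $f_! i_{K,!} i_K^* f^!\Lambda_S \in \Shv(S)$ is supported on $K$: by base change for the cartesian square with $f \circ i_K = i_K^S \circ f_K$ (where $f_K : X_K \to K$ and $i_K^S : K \hook S$), we have $f_! i_{K,!} \simeq (i_K^S)_* f_{K,!}$. Since $K$ is compact, $*$- and $!$-pushforward to $\pt$ agree on this sheaf. Poincaré duality (\thmref{thm:sixops}\itemref{item:sixops/purity}) for $S$ gives $\Lambda_S \simeq \omega_S\vb{-T_S}$, whence $f^!\Lambda_S \simeq \omega_X \otimes f^*\Lambda_S\vb{-T_S}$; combined with the projection formula for $i_{K,!}$, a short manipulation rewrites the pair term in the claimed form.

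The main obstacle is (i). Morally, the open substacks $X\setminus X_K$ shrink to the empty stack as $K$ exhausts $S$, so their chains should vanish. More precisely, the pro-system $\{j_{K,!}j_K^* f^!\Lambda_S\}_K$ of sheaves on $X$ is stalkwise eventually zero, since every point of $X$ maps into some compact $K \sub S$. Promoting this pointwise vanishing to vanishing of the derived limit of chains requires some care; a clean route is to pass to a cofinal sequential exhaustion (which exists when $S$ is $\sigma$-compact, e.g., for a topological manifold with countable basis) and observe that sequential inverse limits of eventually-zero pro-systems are zero. Alternatively, one can invoke smooth descent for $\Chom$ (\thmref{thm:props}\itemref{item:ayog1}) to reduce to the case of topological spaces, where the vanishing is classical.
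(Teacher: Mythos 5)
Your overall structure is sound, and step (ii) is essentially correct and matches the computation in the paper's proof: base change $f_!\,i_{K,!} \simeq (i_K^S)_* f_{K,!}$, properness of $a_K : K \to \pt$ to turn $*$-pushforward into $!$-pushforward, and Poincar\'e duality on $S$ to absorb the twist. (One small redundancy: $f^!\Lambda_S \simeq \omega_X\vb{-T_S}$ directly, since $f^!\omega_S \simeq \omega_X$; the extra $\otimes\, f^*\Lambda_S$ is the unit.) The paper's own proof reaches the same identification by applying $a_{S,*}$ to $\lim_K i_{K,*}i_K^* f_!f^! a_S^!\Lambda$ and using the same base-change/properness manipulations.

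The genuine gap is in step (i). The statement $\lim_K \Chom((X\setminus X_K)_{/S}) \simeq 0$ is exactly equivalent, via the localization triangle, to the assertion that the canonical map $a_{S,*}\sG \to \lim_K a_{S,*} i_{K,*} i_K^* \sG$ is invertible for $\sG = f_!f^!\Lambda_S$; this in turn is the content of the equivalence $\Shv(S) \simeq \lim_K \Shv(K)$ which the paper invokes directly. None of the three justifications you offer actually establishes it. \textbf{Stalkwise eventual vanishing} of the pro-system $\{j_{K,!}j_K^* f^!\Lambda_S\}_K$ does not imply vanishing of the limit, because taking stalks does not commute with cofiltered limits in the derived category; it would at best control the pro-homotopy type, not $\lim$. \textbf{Cofinal sequential exhaustion} doesn't help either: the chain complexes $\Chom((X\setminus X_K)_{/S})$ are \emph{not} eventually zero (e.g.\ for $S=X=\bR^2$ and $K=S^1$ one finds $\Chom((\bR^2\setminus S^1)_{/\bR^2}) \simeq \Lambda[-2]$), so ``sequential inverse limits of eventually-zero pro-systems are zero'' does not apply; for the exhaustion by closed balls the terms \emph{do} vanish, but proving this is precisely the point of the completeness statement you are trying to avoid. \textbf{Smooth descent} would present $\Chom((X\setminus X_K)_{/S})$ as a colimit over $\bDelta^\op$, which is not filtered, so the colimit does not commute with the inverse limit over $K$ and the reduction to topological spaces is not immediate. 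The cleanest fix is to appeal, as the paper does, to the equivalence $\Shv(S) \simeq \lim_K \Shv(K)$, from which $\sG \simeq \lim_K i_{K,*}i_K^*\sG$ and hence $\lim_K j_{K,!}j_K^*\sG \simeq 0$ follow formally.

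Stylistically, the paper's route is slightly more efficient: rather than introducing the localization triangle and arguing that the open piece vanishes, it substitutes $\sG \simeq \lim_K i_{K,*}i_K^*\sG$ directly into $\Chom(X_{/S}) = a_{S,*}\sG\vb{-T_S}$ and then massages the terms. Your version would work once step (i) is correctly tied to this completeness input; as written, that step is a genuine hole.
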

\begin{proof}
  We have $\Chom(X_{/S}) \simeq a_{S,*} f_!f^!a_S^!\Lambda\vb{-T_S}$.
  Using the equivalence $\Shv(S) \to \lim_{K\sub S} \Shv(K)$ (via $*$-inverse image) and base change for the squares
  \[\begin{tikzcd}
    X_K \ar{r}{i_X}\ar{d}{f_K}
    & X \ar{d}{f}
    \\
    K \ar{r}{i}
    & S,
  \end{tikzcd}\]
  we can write
  \[f_!f^!a_S^!\Lambda \simeq \lim_K i_*i^*f_!f^!a_S^!\Lambda \simeq \lim_K i_*f_{K,!}i_X^*f^!a_S^!\Lambda.\]
  Thus $a_{S,*}f_!f^!a_S^!\Lambda$ is the limit over $K$ of
  \[
    a_{S,*}i_*f_{K,!}i_X^*f^!a_S^!\Lambda
    \simeq a_{S,!}i_!f_{K,!}i_X^*f^!a_S^!\Lambda
    \simeq a_{X,!}i_{X,!}i_X^*a_X^!\Lambda
    \simeq \Chom(X, X\setminus X_K)
  \]
  using the fact that $a_K = a_S \circ i$ is proper and that $a_S \circ i \circ f_K = a_X \circ i_X$.
\end{proof}

\begin{rem}\label{rem:lim1}
  Assume for simplicity that $S$ is an oriented $C^\infty$-manifold of dimension $d$ (so that $\vb{-T_S} \simeq [-d]$).
  At the level of homology groups, \lemref{lem:v0b01} means that there are canonical surjections
  \[
    \H_n(X_{/S})
    \twoheadrightarrow \lim_{K \sub S} \H_{n+d}(X, X\setminus X_K)
  \]
  with generally nontrivial kernels
  \[ {\lim_{K}}^1 \H_{n+d+1}(X, X\setminus X_K). \]
  Compare \cite[Thm.~7.3]{SpanierDuality}.
\end{rem}

\section{Equivariant homology}

\subsection{Definition}

Let $G$ be a group acting on an Artin stack $X$.\footnote{%
  As before, ``stack'' can be read in the topological, complex analytic, or algebraic category as preferred, and the same goes for ``group'' (it does not mean ``discrete group'').
}
Consider the projection
\[ f : [X/G] \to BG := [\pt/G] \]
from the quotient stack to the classifying stack.\footnote{%
  See e.g. \cite[\S 4]{stacksncts} for the definition of group actions on higher stacks and their quotient stacks.
}
For every commutative ring $\Lambda$, the complexes of $G$-equivariant (Borel--Moore) chains on $X$, denoted by
\[
  \Chom^G(X)
  := \Chom([X/G]_{/BG}),
  \quad
  \Chom^{\BM,G}(X)
  := \Chom([X/G]_{/BG}),
\]
are the (derived) global sections of the objects
\begin{align*}
  \CHOM^G(X)
  &:= \CHOM([X/G]_{/BG})
  := f_!f^!(\Lambda_{BG}),\\
  \CHOM^{\BM,G}(X)
  &:= \CHOMBM([X/G]_{/BG})
  := f_*f^!(\Lambda_{BG}),
\end{align*}
of $\Shv(BG)$.

\subsubsection{}\label{sssec:rnqaivsq}

Since $a_{BG} : BG \to \pt$ is smooth with $\vb{-T_{BG}} \simeq \vb{T_G}$, we have $\Lambda_{BG} \simeq \omega_{BG}\vb{T_G}$ by Poincaré duality.
Thus
\[ a_{BG,*}f_*f^!(\Lambda_{BG}) \simeq a_{BG,*}f_*f^!a_{BG}^!(\Lambda)\vb{T_G}, \]
or in other words, $\Chom^{\BM,G}(X) \simeq \CBM([X/G])\vb{T_G}$.

\subsubsection{}

For homology, we get the less clean description
\[ \Chom^{G}(X) = a_{BG,*}f_!f^!(\Lambda_{BG}) \simeq a_{BG,*}f_!f^!a_{BG}^!(\Lambda)\vb{T_G}. \]
When $a_{BG} : BG \to \pt$ is not proper\footnote{%
  This amounts to compactness of $G$.
  For example, the complex analytification of a linear algebraic group will be compact if and only if it is finite.
}, this cannot be identified with $\Chom([X/G])\vb{T_G}$.

\subsection{The Borel construction}
\label{ssec:Borel}

We will now relate our definitions of equivariant (Borel--Moore) homology to a model for the Borel construction.
To avoid orientation issues, we will work in the complex analytic or algebraic categories for simplicity.

Say a space $A$ is $k$-acyclic if for every eventually coconnective complex $K \in \Shv(\pt)$, the unit $K \to a_*a^*(K)$ is $k$-coconnective, where $a : A \to \pt$ is the projection.
Let $(E_nG)_{n}$ be a sequential diagram of oriented $C^\infty$-manifolds with free $G$-action such that for every integer $k>0$, $E_nG$ is $k$-acyclic for sufficiently large $n$.

For example, if $G$ is a linear algebraic group, then the construction of \cite[Rem.~1.4]{TotaroChow} provides an example of such an $(E_nG)_n$ by \cite[Prop.~4.15]{equilisse}.\footnote{%
  See also \cite[\S 3.1]{BernsteinLunts} for a construction when $G$ is a Lie group with finitely many components.
}

Note that the freeness assumption means that the quotient stacks\footnote{%
The quotients can be formed in the $C^\infty$ category, as the ``associated topological stack'' functor is colimit-preserving.
Similarly, if $X$ and $G$ are algebraic or complex analytic, then the quotients could be taken equivalently in the corresponding category.
} $B_nG := [E_nG/G]$ have trivial stabilizers\footnote{%
  and are Hausdorff (even manifolds) when $G$ acts properly on $E_nG$ (e.g. if $G$ is compact).
}.
We denote by $q_n : B_nG \to BG$ the projections.

\begin{prop}\label{prop:Borel}
    For every $\sF \in \Shv(BG)$, the canonical morphism
    $\sF\to \lim_n q_{n,*}q_n^*(\sF)$
    is invertible.
\end{prop}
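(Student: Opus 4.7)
The plan is to reduce the statement, via smooth descent and smooth--smooth base change, to a concrete claim about the $k$-acyclic spaces $E_nG$. Since the universal $G$-torsor $p : \pt \to BG$ is a smooth surjection, its $*$-pullback induces an equivalence between $\Shv(BG)$ and the totalization of the \v{C}ech nerve of $p$ by \propref{prop:desc}\itemref{item:desc/smdescent}. In particular, $p^*$ is conservative, so it suffices to verify invertibility of the map $\sF\to \lim_n q_{n,*}q_n^*(\sF)$ after applying $p^*$.

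For each $n$, I form the cartesian square
\[\begin{tikzcd}
  E_nG \ar{r}{p_n}\ar{d}{a_n}
  & B_nG \ar{d}{q_n}
  \\
  \pt \ar{r}{p}
  & BG.
\end{tikzcd}\]
Both $p$ and $q_n$ are smooth morphisms of stacks: $p$ because $p_n$ is a $G$-torsor, and $q_n$ because its base change $a_n : E_nG \to \pt$ is a smooth morphism of spaces. Point~(2) of~\sssecref{sssec:more base change} therefore provides an isomorphism $\Ex^*_* : p^*q_{n,*} \simeq a_{n,*}p_n^*$. Combined with the tautological identification $p_n^*q_n^* \simeq a_n^*p^*$, setting $K := p^*\sF$ reduces the claim to showing that for every $K \in \Shv(\pt) \simeq D(\Lambda)$, the natural map $K \to \lim_n a_{n,*}a_n^*(K)$ is an equivalence.

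For eventually coconnective $K$, the $k$-acyclicity hypothesis yields, for each fixed $k$, some $N(k)$ such that the fibre $\Fib(K \to a_{n,*}a_n^*K)$ lies in $\Shv^{\ge k}(\pt)$ for all $n \ge N(k)$. Passing to the limit in $n$, this fibre ends up in $\bigcap_k \Shv^{\ge k}(\pt) = 0$ by left-completeness of $D(\Lambda)$, which yields the desired isomorphism in this case.

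For a general (possibly unbounded) $K \in \Shv(\pt)$, I would invoke left-completeness once more to write $K \simeq \lim_m \tau^{\ge -m}K$ and propagate the preceding statement through this Postnikov limit via an interchange of the two limits $\lim_n$ and $\lim_m$. The main obstacle is precisely this interchange: it requires a uniform version of the $k$-coconnectivity estimate provided by $k$-acyclicity, accounting for how $N(k)$ is permitted to depend on the truncation level, and then checking that $a_{n,*}a_n^*$ interacts compatibly with truncation. Pinning down this bookkeeping is the delicate technical point of the proof.
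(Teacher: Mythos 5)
Your reduction is essentially the same route the paper takes: transfer the question along the smooth surjection $p : \pt \twoheadrightarrow BG$ and conclude from the $k$-acyclicity of the $E_nG$. The paper packages this as a transfer of $k$-pro-acyclicity via observation~(3) in the surrounding discussion, together with observation~(1); you instead unfold that machinery into a direct claim about $D(\Lambda)$. Two points should be tightened, though.

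First, when you ``verify invertibility after applying $p^*$'' you implicitly pull $p^*$ inside the sequential limit, i.e.\ you use that the comparison map $p^*\lim_n q_{n,*}q_n^*\sF \to \lim_n p^*q_{n,*}q_n^*\sF$ is invertible. This is not automatic for the left adjoint $p^*$, but it does hold here because $p$ is smooth: by Poincaré duality (\thmref{thm:sixops}\itemref{item:sixops/purity}), $p^* \simeq p^!\vb{-T_p}$, and $p^!$ is a right adjoint, hence preserves limits. You should say this explicitly, since the whole reduction rests on it.

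Second, the ``delicate technical point'' you flag in your final paragraph is not actually an obstacle. Both $\lim_n$ and the Postnikov tower $\lim_m$ are honest limits, and limits always commute with limits; no uniformity in $N(k)$ with respect to the truncation level is needed. The only additional thing to verify is that $a_{n,*}a_n^*$ commutes with the Postnikov limit $K \simeq \lim_m \tau^{\ge -m}K$, and this again follows from Poincaré duality for the smooth map $a_n : E_nG \to \pt$: $a_n^* \simeq a_n^!\vb{-T_{a_n}}$ and $a_{n,*}$ both preserve limits. With these two observations, the eventually coconnective case that you prove implies the general case by left-completeness of $D(\Lambda)$, and your argument is complete.
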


\begin{cor}
  Let $X$ be an Artin stack with $G$-action.
  Then the canonical maps in $\Shv(BG)$
  \begin{align*}
    \CHOM^{\BM,G}(X)
    &\to \lim_n \CHOMBM\big(X \fibprod^{G} E_nG\big)[-d_n+g]
    \\
    \CHOM^G(X)
    &\to \lim_n q_{n,*} \CHOM\big(X \fibprod^G E_nG_{/B_nG}\big)
  \end{align*}
  are invertible, where $d_n$ is the dimension of $B_nG$ and $g=\dim(G)$.
\end{cor}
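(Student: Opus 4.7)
The plan is to apply \propref{prop:Borel} to the sheaves $\CHOM^G(X) = f_! f^!(\Lambda_{BG})$ and $\CHOM^{\BM,G}(X) = f_* f^!(\Lambda_{BG})$ of $\Shv(BG)$, where $f : [X/G] \to BG$ is the structure map. After this reduction, the task is to identify $q_n^*$ of each of these and check that, upon applying $q_{n,*}$ and passing to the limit, we recover the right-hand side of the corresponding formula.

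The main computation takes place on the cartesian square
\[
\begin{tikzcd}
  X \fibprod^G E_nG \ar{r}{r_n}\ar{d}{\tilde{f}_n} & [X/G] \ar{d}{f} \\
  B_nG \ar{r}{q_n} & BG,
\end{tikzcd}
\]
in which $q_n$ (and hence $r_n$) is smooth: its base change along the atlas $\pt \twoheadrightarrow BG$ is the smooth map $E_nG \to \pt$. Base change (\thmref{thm:sixops}\itemref{item:sixops/basechange}) gives $q_n^* f_! \simeq \tilde{f}_{n,!} r_n^*$, and smooth base change (\sssecref{sssec:more base change}(2)) gives $q_n^* f_* \simeq \tilde{f}_{n,*} r_n^*$. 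The crucial identity is then
\[ r_n^* f^!(\Lambda_{BG}) \simeq \tilde{f}_n^!(\Lambda_{B_nG}), \]
which I would prove by writing $r_n^* \simeq r_n^!\vb{-T_{r_n}}$ via Poincaré duality (\thmref{thm:sixops}\itemref{item:sixops/purity}), using the composition $r_n^! f^! \simeq \tilde{f}_n^! q_n^!$, applying Poincaré duality again to identify $q_n^!(\Lambda_{BG}) \simeq \Lambda_{B_nG}\vb{T_{q_n}}$, and cancelling the two Thom twists via the projection formula (\thmref{thm:sixops}\itemref{item:sixops/projection}) together with the base-change identification $T_{r_n} \simeq \tilde{f}_n^* T_{q_n}$ of tangent bundles.

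Combining these yields $q_n^* \CHOM^G(X) \simeq \CHOM((X\fibprod^G E_nG)_{/B_nG})$ and $q_n^* \CHOM^{\BM,G}(X) \simeq \CHOMBM((X\fibprod^G E_nG)_{/B_nG})$. Applying $q_{n,*}$ and invoking \propref{prop:Borel} yields the second formula of the corollary immediately. For the first formula, I would additionally apply Poincaré duality on the smooth (canonically oriented) $B_nG$ to trivialize $\omega_{B_nG/\pt}$, thereby converting the relative Borel--Moore chains over $B_nG$ into absolute ones, with the cohomological shift $[-d_n + g]$ arising from the orientation.

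The hardest step is the Thom twist bookkeeping: one must verify that $\vb{T_{q_n}}$ (from Poincaré duality on $q_n$) cancels against $\vb{-T_{r_n}}$ (from Poincaré duality on $r_n$), which requires the projection formula together with the base-change invariance $T_{r_n} \simeq \tilde{f}_n^* T_{q_n}$. The shift $[-d_n+g]$ in the Borel--Moore case similarly requires a careful comparison of the relative and absolute dualizing complexes of $B_nG$.
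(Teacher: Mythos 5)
Your proposal follows the same strategy as the paper's proof: apply \propref{prop:Borel} to the sheaves $\CHOM^{G}(X)$ and $\CHOM^{\BM,G}(X)$, then use base change along the cartesian square involving $q_n$ and Poincar\'e duality to rewrite $q_{n,*}q_n^*(-)$.  The computation is organized a little differently --- the paper applies Poincar\'e duality for $q_n$ at once, writing $q_n^* \simeq q_n^![-d_n]$, and then uses the $\Ex^!_*$ exchange, whereas you compute $q_n^*f_!$ and $q_n^*f_*$ by the $\Ex^*_!$ and $\Ex^*_*$ exchanges and then prove the identity $r_n^*f^!(\Lambda_{BG}) \simeq \tilde f_n^!(\Lambda_{B_nG})$ by a two-step Poincar\'e duality argument, deferring the cohomological shift to the final step --- but the two routes are equivalent.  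One point you should make explicit is the shift bookkeeping: trivializing $\omega_{B_nG}$ on an oriented manifold of dimension $\dim(B_nG)$ gives a shift of $[-\dim(B_nG)]$, so your claimed $[-d_n+g]$ is correct precisely when $d_n=\dim(E_nG)=\dim(B_nG)+g$.  This reading also makes the paper's $q_n^*\simeq q_n^![-d_n]$ consistent, since the relative dimension of $q_n : B_nG \to BG$ is $\dim(B_nG)-\dim(BG)=\dim(E_nG)$; the corollary's parenthetical ``$d_n$ is the dimension of $B_nG$'' appears to be a slip for the dimension of $E_nG$, and your argument is correct once this is made explicit.
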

\begin{proof}
  Applying \propref{prop:Borel} to $\sF = \CHOM^{\BM,G}(X) = f_*f^!(\Lambda_{BG})$, where $f : [X/G] \to BG$, yields
  \[f_*f^!(\Lambda_{BG})\simeq \lim_n q_{n,*} q_{n}^* f_* f^! (\Lambda_{BG}).\]
  Using the Poincaré duality isomorphism $q_{n}^* \simeq q_{n}^! [-d_n]$ and the base change formula, we may write
  \begin{equation*}
    q_{n,*} q_{n}^* f_* f^! (\Lambda_{BG})
    \simeq q_{n,*} f_{n,*} f_n^! q_n^! (\Lambda_{BG})[-d_n]
  \end{equation*}
  where $f_n : X\fibprod^G E_n \to B_nG$ is the base change of $f$.
  Translating back, this is
  \[\CHOM^{\BM,G} (X\fibprod^G E_nG)[-d_n] \simeq \CHOM^{\BM} (X\fibprod^G E_nG)[-d_n+g]\]
  by \sssecref{sssec:rnqaivsq}.

  Similarly, \propref{prop:Borel} expresses $\sF = \CHOM^G(X) = f_!f^!(\Lambda_{BG})$ as the limit over $n$ of
  \begin{align*}
    q_{n,*} q_{n}^* f_! f^! (\Lambda_{BG})
    \lim_n q_{n,*} f_{n,!} f_{n}^! q_n^! (\Lambda_{BG})[-d_n]
    \lim_n q_{n,*} f_{n,!} f_n^! (\Lambda_{B_nG}).
  \end{align*}
  Translating back, this is $\CHOM (X\fibprod^G E_nG_{/B_nG})$.
\end{proof}

The proof of \propref{prop:Borel} involves some analysis using the cohomological t-structure on $\Shv(X)$ (see Footnote~\ref{fn:cohtstr}).
Given a sequential diagram $\{f_n : A_n \to S\}_n$ of Artin stacks, we say that $\{f_n\}_n$ is \emph{$k$-pro-acyclic} if for every eventually coconnective $\sF \in \Shv(S)$, there exists an index $N$ such that the unit map $\sF \to f_{n,*}f_n^*(\sF)$ has $k$-coconnective fibre for all $n \ge N$.
We make the following three observations:
\begin{enumerate}
  \item 
  If $\{f_n\}_n$ is $k$-pro-acyclic, then the induced map $\sF \to \lim_n f_{n,*}f_n^*(\sF)$ has $k$-coconnective fibre.
  Indeed, coconnectivity is stable under limits.

  \item
  If $\{f_n\}_n$ is $k$-pro-acyclic, then any smooth base change is $k$-pro-acyclic.
  Indeed, smooth $*$-inverse image is t-exact and commutes with $*$-direct image (using Poincaré duality and the fact that $!$-inverse image commutes with $*$-direct image).

  \item
  If the base change of $\{f_n\}_n$ along a smooth surjection $S' \twoheadrightarrow S$ becomes $k$-pro-acyclic, then $\{f_n : A_n \to S\}_n$ was already pro-acyclic.
  Indeed, $*$-inverse image along smooth surjections is t-exact and conservative.
\end{enumerate}

\begin{proof}[Proof of \propref{prop:Borel}]
  It will suffice to show that $\{q_n\}_n$ is $k$-pro-acyclic for every $k$.
  By base change along the surjective submersion $\pt \twoheadrightarrow BG$, this follows from the assumption that $\{a_{E_nG} : E_nG \to \pt\}_n$ is $k$-pro-acyclic.
\end{proof}

Passing to hypercohomology groups, the isomorphism of \propref{prop:Borel} gives rise for every $i \in \Z$ to a surjective map
\[\H^i(BG, \sF) \xrightarrow{\sim} \H^i(BG, \lim_n q_{n,*}q_n^*\sF) \twoheadrightarrow \lim_n \H^i(BG, q_{n,*}q_n^*\sF)\]
whose kernel is a $\lim^1$ term.

\begin{lem}\label{lem:ludpxyiz}
  If $\sF \in \Shv(BG)$ is eventually coconnective, then for every integer $i\in\Z$, there exists a sufficiently large index $N$ such that
  \[ \H^i(BG, \sF) \to \H^i(BG, q_{n,*}q_{n}^*\sF) \simeq \H^i(B_nG, q_n^*\sF) \]
  is invertible for all $n\ge N$.
\end{lem}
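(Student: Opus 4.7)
The plan is to deduce the lemma from the pro-acyclicity of $\{q_n\}_n$ already established in the proof of \propref{prop:Borel}, by passing from the sheaf level to hypercohomology via t-exactness.

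First, I would observe that $\R\Gamma(BG,-) = a_{BG,*}$ is left t-exact for the cohomological t-structure. Working in the algebraic or complex-analytic category, the structural morphism $a_{BG} : BG \to \pt$ is smooth (take the atlas $\pt \twoheadrightarrow BG$, whose structural morphism $\pt \to \pt$ is trivially smooth), so by the definition of the t-structure recorded in Footnote~\ref{fn:cohtstr}, $a_{BG}^*$ is t-exact. Passing to the right adjoint, $a_{BG,*}$ preserves $\Shv(-)^{\ge k}$ for every $k$.

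Next, the proof of \propref{prop:Borel} shows that $\{q_n\}_n$ is $k$-pro-acyclic for every $k$. For fixed $i$, I would take $k = i+2$: applied to our eventually coconnective $\sF$, this gives an index $N$ such that for all $n \ge N$, the fibre $F_n$ of the unit $\sF \to q_{n,*}q_n^*\sF$ lies in $\Shv(BG)^{\ge k}$. Combined with left t-exactness of $a_{BG,*}$, this yields $\H^j(BG, F_n) = 0$ for all $j < k$, and in particular for $j = i$ and $j = i+1$.

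Applying $\R\Gamma(BG,-)$ to the fibre sequence $F_n \to \sF \to q_{n,*}q_n^*\sF$ and extracting the long exact sequence in cohomology, the vanishings $\H^i(BG, F_n) = 0 = \H^{i+1}(BG, F_n)$ force $\H^i(BG, \sF) \xrightarrow{\sim} \H^i(BG, q_{n,*}q_n^*\sF)$. The final identification with $\H^i(B_nG, q_n^*\sF)$ is immediate from $a_{BG} \circ q_n = a_{B_nG}$ together with the compatibility of $*$-direct image with composition. No real obstacle presents itself: the essential content (pro-acyclicity of $\{q_n\}_n$) has already been extracted, and what remains is routine t-structure bookkeeping.
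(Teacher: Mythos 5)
Your argument is correct, and it fills in exactly the details that the paper leaves to the cited reference (the paper's own ``proof'' is just a pointer to \cite[Rem.~4.10]{equilisse}). Your t-exactness bookkeeping is right: $a_{BG}^*$ is t-exact by Footnote~\ref{fn:cohtstr} (since $a_{BG}$ is smooth), hence its right adjoint $a_{BG,*}$ is left t-exact; then taking $k=i+2$ in the $k$-pro-acyclicity statement gives $F_n \in \Shv(BG)^{\ge i+2}$ so that $\H^i(BG,F_n)=\H^{i+1}(BG,F_n)=0$, and the long exact sequence of the fibre sequence $F_n \to \sF \to q_{n,*}q_n^*\sF$ yields the claimed isomorphism. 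The final identification $\H^i(BG,q_{n,*}q_n^*\sF)\simeq\H^i(B_nG,q_n^*\sF)$ is indeed just $a_{BG}\circ q_n = a_{B_nG}$. This is the same route the reference takes; nothing different in substance, just spelled out.
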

\begin{proof}
  As explained in \cite[Rem.~4.10]{equilisse}, this is a consequence of the fact that $\{q_n\}_n$ is $k$-pro-acyclic for all $k$.
\end{proof}

\begin{cor}\label{cor:HiGX}
  Let $X$ be an Artin stack with $G$-action.
  Then for every integer $i\in\Z$ there exists a sufficiently large index $N$ such that the map
  \begin{equation*}
    \H_i^G(X) \to \H_i\big(B_nG, f_{n,!}f_{n}^!(\Lambda_{BG})\big) \simeq \H_i(X \fibprod^G E_nG_{/B_nG})
  \end{equation*}
  is invertible for all $n\ge N$, where $f_{n} : X\fibprod^G E_nG \to B_nG$ is the base change of $f : [X/G] \to BG$.
  If the dualizing complex $\omega_X$ is eventually coconnective\footnote{%
    In the algebraic category, if $X$ is an Artin stack of finite dimension in the sense of \cite[Tag~0AFL]{Stacks}, then $\omega_X$ is eventually coconnective by \cite[Thm.~4.4]{equilisse}.
  }, then similarly the map
  \[
    \H_i^{\BM,G}(X)
    \to \H_{i+d_n-g}^{\BM}\big(X \fibprod^{G} E_nG\big)
  \]
  is invertible for all $n\ge N$.
\end{cor}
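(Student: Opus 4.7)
The approach is to apply \lemref{lem:ludpxyiz} to the sheaves $\sF := \CHOM^G(X) = f_!f^!(\Lambda_{BG})$ and $\sF := \CHOM^{\BM,G}(X) = f_*f^!(\Lambda_{BG})$ in $\Shv(BG)$. Once eventual coconnectivity of $\sF$ is established, the lemma yields, for each fixed $i$, an index $N$ such that
\[ \H^{-i}(BG, \sF) \xrightarrow{\sim} \H^{-i}(B_nG, q_n^*\sF) \]
for all $n \ge N$; the right-hand side is then identified with the claimed (Borel--Moore) homology of $X\fibprod^G E_nG$ by exactly the base change plus Poincaré duality computation already carried out in the proof of the preceding corollary. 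In particular, the shift by $d_n - g$ in the Borel--Moore case comes from $\Lambda_{B_nG} \simeq \omega_{B_nG}[-d_n]$ combined with $\Lambda_{BG} \simeq \omega_{BG}\vb{T_G}$ as in \sssecref{sssec:rnqaivsq}.

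It remains to check that each $\sF$ is eventually coconnective in $\Shv(BG)$. Since $*$-inverse image along the smooth surjection $q : \pt \twoheadrightarrow BG$ is both t-exact and conservative (Footnote~\ref{fn:cohtstr}), this can be verified after pulling back along $q$. For the cartesian square with horizontal arrows $q : \pt \to BG$, $q' : X \to [X/G]$ and vertical arrows $a_X$, $f$, smooth base change together with the composition $(q')^!f^! \simeq a_X^!q^!$ and Poincaré duality for $q$ (whose relative tangent is $T_G$) give $(q')^*f^!(\Lambda_{BG}) \simeq \omega_X$; hence $q^*\CHOM^G(X) \simeq \Chom(X)$ and $q^*\CHOM^{\BM,G}(X) \simeq \CBM(X)$.

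Now $\Chom(X)$ is always concentrated in non-positive cohomological degrees---singular homology vanishes in negative degree, cf.\ \ssecref{ssec:homotype}---so $\CHOM^G(X)$ is automatically eventually coconnective and the homology statement follows unconditionally. For the Borel--Moore variant, the hypothesis that $\omega_X$ is eventually coconnective, together with bounded cohomological amplitude of $a_{X,*}$, gives that $\CBM(X) = a_{X,*}\omega_X$ is eventually coconnective. The main obstacle is precisely this last step: transferring boundedness from $\omega_X$ to $a_{X,*}\omega_X$ implicitly requires $a_{X,*}$ to be of bounded amplitude on eventually coconnective objects, a mild finiteness satisfied in the settings of practical interest (e.g.\ algebraic Artin stacks of finite dimension, as in the footnote) where the very same hypotheses that ensure $\omega_X$ is eventually coconnective also bound the cohomological dimension of $X$.
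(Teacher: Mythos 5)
Your overall strategy coincides with the paper's: apply \lemref{lem:ludpxyiz} to $\sF = \CHOM^G(X)$ and $\sF = \CHOM^{\BM,G}(X)$, and verify eventual coconnectivity after $*$-pullback along the smooth surjection $\pt\twoheadrightarrow BG$, which (being t-exact and conservative) reduces the claim to the eventual coconnectivity of $\Chom(X)$ and $\CBM(X)$. For $\Chom(X)$ you invoke the identification with $\Chom^{\Anima}(\abs{X})$, whereas the paper argues directly that $\Chom(-)$ is left Kan extended from spaces by smooth codescent (\thmref{thm:props}\itemref{item:ayog1}) and that the relevant (co)connectivity is stable under colimits; these are essentially the same argument since the homotopy-type proposition is itself a restatement of that left Kan extension.

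Where your proposal goes wrong is in the Borel--Moore step. You claim the argument ``implicitly requires $a_{X,*}$ to be of bounded amplitude on eventually coconnective objects'' and present this as a genuine obstacle requiring extra finiteness hypotheses. It does not. The only property of $a_{X,*}$ one needs is \emph{left t-exactness}, which holds unconditionally: $a_X^* : \Shv(\pt) \to \Shv(X)$ is t-exact for the cohomological t-structure (this is immediate after $*$-pullback to a smooth atlas of $X$, using the definition in Footnote~\ref{fn:cohtstr}), and hence its right adjoint $a_{X,*}$ preserves $\Shv(X)^{\ge -N}$ for every $N$. Thus $\omega_X \in \Shv(X)^{\ge -N}$ directly implies $\CBM(X) = a_{X,*}\omega_X \in \Shv(\pt)^{\ge -N}$; there is no need for $a_{X,*}$ to have bounded cohomological amplitude in the other direction. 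This is exactly the one-line argument the paper uses (``since formation of global sections is always left t-exact''), and the footnote on finite-dimensionality concerns only the hypothesis that $\omega_X$ itself is eventually coconnective, not any property of $a_{X,*}$.
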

\begin{proof}
  To apply \lemref{lem:ludpxyiz}, we need to check that the sheaves $f_*f^!(\Lambda_{BG})$ and $f_!f^!(\Lambda_{BG})$ are eventually coconnective.
  Note that $f_!$, $f_*$ and $f^!$ all commute with smooth $*$-inverse image (see \sssecref{sssec:more base change} for the latter two).
  Since $*$-inverse image along $\pt \twoheadrightarrow BG$ is t-exact and conservative, it will thus suffice to check the eventually coconnectivity of $\CBM(X)$ and $\Chom(X)$ instead.

  For $\Chom(X)$, recall that by smooth codescent (\thmref{thm:props}\itemref{item:ayog1}), $\Chom(-)$ is left Kan extended from spaces.
  Since connectivity is stable under colimits, we reduce to the case where $X$ is a space, which is clear e.g. from the singular chain complex model of $\Chom(X)$.

  For $\CBM(X)$, the claim follows from the assumption that the dualizing complex $\omega_{X}$ is eventually coconnective, since formation of global sections is always left t-exact.
\end{proof}



\bibliographystyle{halphanum}

\begin{thebibliography}{EHKSY}

  \bibitem[BL]{BernsteinLunts} J.~Bernstein, V.~Lunts, \textit{Equivariant sheaves and functors}.  Lect. Notes in Math.~{\bf{1578}} (1994).

  \bibitem[Bla]{Blanc} A.~Blanc, \textit{Topological $K$-theory of complex noncommutative spaces}.  Compos. Math.~{\bf{152}} (2016), no.~3, 489--555.

  \bibitem[Boj]{Bojko} A.~Bojko, \textit{Wall-crossing for Calabi--Yau fourfolds: framework, tools, and applications}.  Forthcoming.

  \bibitem[Bre]{Bredon} G.\,E.~Bredon, \textit{Sheaf theory}.  Grad. Texts Math.~{\bf{170}} (1997), Springer, 502 p.

  \bibitem[Gode]{Godement} R.~Godement, \textit{Topologie algébrique et théorie des faisceaux}.  Hermann Paris (1958).

  \bibitem[Hai]{Haine} P.~Haine, \textit{Descent for sheaves on compact Hausdorff spaces}.  \arXiv{2210.00186} (2022).

  \bibitem[Joy]{Joyce} D.~Joyce, \textit{Enumerative invariants and wall-crossing formulae in abelian categories}.  \arXiv{2111.04694} (2021).

  \bibitem[KR]{equilisse} A.\,A.~Khan, C.~Ravi, \textit{Equivariant generalized cohomology via stacks}.  \arXiv{2209.07801} (2022).

  \bibitem[KS]{KashiwaraSchapira} M.~Kashiwara, P. Schapira, \textit{Sheaves on manifolds}.  Grundlehren Math. Wiss.~{\bf{292}} (1990), Springer-Verlag.

  \bibitem[KV]{KapranovVasserot} M.~Kapranov, E.~Vasserot, \textit{The cohomological Hall algebra of a surface and factorization cohomology}, \arXiv{arXiv:1901.07641} (2019).

  \bibitem[Kha1]{virtual} A.\,A.~Khan, \textit{Virtual fundamental classes for derived stacks I}. \arXiv{1909.01332} (2019).

  \bibitem[Kha2]{KhanCohdag} A.\,A.~Khan, \textit{Cohomological intersection theory and derived algebraic geometry}.  Lecture notes, available at: \url{https://www.preschema.com/lecture-notes/cohdag2022/cohdag2022.pdf} (2022).

  \bibitem[Kha3]{stacksncts} A.\,A.~Khan, \textit{Lectures on algebraic stacks}.  \arXiv{2310.12456} (2023).

  \bibitem[Kha4]{weaves} A.\,A.~Khan, \textit{Weaves}.  Available at: \url{https://www.preschema.com/papers/weaves.pdf} (2023).

  \bibitem[Kha5]{weavelisse} A.\,A.~Khan, \textit{Lisse extensions of weaves}.  \arXiv{2501.04114} (2025).

  \bibitem[LO]{LaszloOlsson} Y.~Laszlo, M.~Olsson, \textit{The six operations for sheaves on Artin stacks. I. Finite coefficients}. Publ.~Math.~Inst.~Hautes~Études~Sci., no.~{\bf{107}} (2008), 109--168.

  \bibitem[LZ]{LiuZheng} Y.~Liu, W.~Zheng, \textit{Enhanced six operations and base change theorem for higher Artin stacks}. \arXiv{1211.5948} (2012).

  \bibitem[Lur]{SAG} J.~Lurie, \textit{Spectral algebraic geometry}, version of 2018-02-03.  Available at: \url{https://www.math.ias.edu/~lurie/papers/SAG-rootfile.pdf}.
  
  \bibitem[Noo1]{NoohiFound} B.~Noohi, \textit{Foundations of topological stacks}.  \arXiv{math/0503247} (2005).
  
  \bibitem[Noo2]{NoohiType} B.~Noohi, \textit{Homotopy types of topological stacks}.  Adv. Math.~{\bf{230}} (2012), no.~4-6, 2014--2047.
  
  \bibitem[Ols]{Olsson} M.~C.~Olsson, \textit{On proper coverings of Artin stacks}.  Adv.~Math.~{\bf{198}} (2005), no. 1, 93--106.

  \bibitem[PY1]{PortaYu} M.~Porta, T.\,Y.~Yu, \textit{Representability theorem in derived analytic geometry}.  J. Eur. Math. Soc. (JEMS)~{\bf{22}} (2020), no.~12, 3867--3951.

  \bibitem[PY2]{PortaYuGW} M.~Porta, T.\,Y.~Yu, \textit{Non-archimedean Gromov--Witten invariants}.  \arXiv{2209.13176} (2022).

  \bibitem[SP]{Stacks} J.~de Jong, \textit{The Stacks Project}, \url{https://stacks.math.columbia.edu}.

  \bibitem[Sim]{Simpson} C.~Simpson, \textit{The topological realization of a simplicial presheaf}.  \href{https://arxiv.org/abs/q-alg/9609004}{q-alg/9609004} (1996).
  
  \bibitem[Spa]{SpanierDuality} E.~Spanier, \textit{Singular homology and cohomology with local coefficients and duality for manifolds}.  Pac. J. Math.~{\bf{160}} (1993), no.~1, 165--200.

  \bibitem[Ste]{Steffens} P.~Steffens, \textit{Derived $C^{\infty}$-Geometry I: Foundations}.  \arXiv{2304.08671} (2023).

  \bibitem[Tot]{TotaroChow} B.~Totaro, \textit{The Chow ring of a classifying space}, in: Algebraic K-theory, Proc.~Symp. Pure~Math.~{\bf{67}} (1999), 249--281.

  \bibitem[Vol]{Volpe} M.~Volpe, \textit{Six functor formalism for sheaves with non-presentable coefficients}.  \arXiv{2110.10212} (2021).

\end{thebibliography}

\noindent
Institute of Mathematics, Academia Sinica, Taipei 10617, Taiwan

\end{document}